\documentclass[11pt]{amsart}

\usepackage[foot]{amsaddr}
\usepackage{color,graphicx}
\usepackage{dsfont}
\usepackage{amssymb}
\usepackage{amsmath}
\usepackage{amsfonts}
\usepackage{graphicx}
\usepackage{amsthm}
\usepackage{enumerate}
\usepackage[mathscr]{eucal}
\usepackage{mathrsfs}
\usepackage{verbatim}
\usepackage{yhmath}

\setlength{\textwidth}{\paperwidth}
\addtolength{\textwidth}{-2in}
\calclayout

\usepackage{hyperref}
\hypersetup{colorlinks, linkcolor=blue}
\usepackage{soul}
\soulregister\cite7
\soulregister\eqref7
\soulregister\eqref7
\hypersetup{colorlinks, linkcolor=blue}

\newtheorem{thm}{Theorem}[section]
\newtheorem{lemma}[thm]{Lemma}
\newtheorem{prop}[thm]{Proposition}
\newtheorem{coro}[thm]{Corollary}

\theoremstyle{definition}

\newtheorem{remark}[thm]{Remark}

\newcommand\ddd{\mathrm{d}}
\newcommand\dist{\mathrm{dist}}

\newcommand\supp{\mathrm{supp}}

\newcommand\bR{\mathbb{R}}

\newcommand\bZ{\mathbb{Z}}

\newcommand\bT{\mathbb{T}}

\def \l {\left}
\def \r {\right}

\makeatletter
\@namedef{subjclassname@2020}{
	\textup{2020} Mathematics Subject Classification}
\makeatother

\allowdisplaybreaks[4]

\begin{document}
\title[Bilinear estimate for Schr\"odinger equation on $\bR \times \bT$]{Bilinear estimate for Schr\"odinger equation on $\bR \times \bT$}
\author{Yangkendi Deng}
\author{Boning Di}
\author{Chenjie Fan}
\author{Zehua Zhao}

\date{}

\begin{abstract}
We continue our study of bilinear estimates on waveguide $\mathbb{R}\times \mathbb{T}$ started in \cite{DFYZZ2024,Deng2023}. The main point of the current article is, comparing to previous work \cite{Deng2023}, that we obtain estimates beyond the semiclassical time regime. Our estimate is sharp in the sense that one can construct examples which saturate this estimate.
\end{abstract}

\maketitle

\tableofcontents

\section{Introduction}
\subsection{Statement of main results}
In this paper, we consider (linear) Schr\"odinger equation on waveguide $\mathbb{R}\times \mathbb{T}$ as follows,
\begin{equation}\label{eq: lsrt}
\begin{cases}
iu_{t}-\Delta u=0,\\
u(x, 0)=f(x).
\end{cases}
\end{equation}
We use $e^{it\Delta_{\mathbb{R}\times \mathbb{T}}}$ to denote the linear (Schr\"odinger) propagator. When there is no confusion, we short it as $e^{it\Delta}$ for convenience.

Note that the physical space here is $\mathbb{R}\times \mathbb{T}$, thus the corresponding (Fourier) frequency space is $\mathbb{R}\times \mathbb{Z}$. We use $Q$ to denote cubes on $\mathbb{R}\times \mathbb{Z}$, and $P_{Q}$ to the associated Littlewood-Paley projections\footnote{See Notations \ref{SubS:Notations} for more details.}.

The main result of this paper reads,
\begin{thm}\label{thm: main}
Consider equation \eqref{eq: lsrt}. Let $p\in (\frac{5}{3},2)$. Let $Q_{1}, Q_{2}$ be two cubes of length $R\ge 1$ on  $\mathbb{R}\times \mathbb{Z}$, and we assume they are $R$ separated, i.e. their centers are of distance $\sim R$. Let $T_{p}$ be defined as $R^{\frac{4p-8}{3-p}}$. Then for all $\varepsilon>0$, one has for all $f,g\in L^{2}(\mathbb{R}\times \mathbb{T})$,
\begin{equation}\label{eq: mainestimate}
\|e^{it\Delta}P_{Q_{1}}fe^{it\Delta}P_{Q_{2}}g\|_{L_{x,t}^{p}(\mathbb{R}\times \mathbb{T}\times [0,T_{p}R^{-\varepsilon}])}\lesssim_{\varepsilon} R^{\frac{2p-4}{p}}\|f\|_{L_{x}^{2}}\|g\|_{L_{x}^{2}}.
\end{equation}
\end{thm}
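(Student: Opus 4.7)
The plan is to recognize $R^{(2p-4)/p}$ as Tao's sharp bilinear restriction exponent for two $R$-separated caps of side $R$ on the paraboloid in $\mathbb{R}^{1+2}$, valid precisely in the range $p>(n+3)/(n+1)=5/3$ for $n=2$. Scaling Tao's unit-scale theorem gives, for any $R$-separated cubes $\tilde Q_1,\tilde Q_2$ of side $R$ in $\mathbb{R}^2$,
\[
\|e^{it\Delta_{\mathbb{R}^2}}P_{\tilde Q_1}\tilde f\cdot e^{it\Delta_{\mathbb{R}^2}}P_{\tilde Q_2}\tilde g\|_{L^p(\mathbb{R}^3)}\lesssim R^{(2p-4)/p}\|\tilde f\|_2\|\tilde g\|_2.
\]
Theorem~\ref{thm: main} thus asserts that on $\mathbb{R}\times\mathbb{T}\times[0,T_pR^{-\varepsilon}]$ the bilinear bound still matches the full-space Euclidean Tao bound up to an $R^\varepsilon$ loss; $T_p$ should be interpreted as the critical time up to which the waveguide behaves like $\mathbb{R}^2$ at the bilinear $L^p$ level.

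To transfer from $\mathbb{R}^2$ to the waveguide, I would use Poisson summation. Lift $f$ on $\mathbb{R}\times\mathbb{T}$ to $\tilde f$ on $\mathbb{R}^2$ with Fourier transform supported on a natural lift of $Q_1$; on the fundamental domain one has
\[
e^{it\Delta_{\mathbb{R}\times\mathbb{T}}}f(x,y)=\sum_{k\in\mathbb{Z}}\bigl(e^{it\Delta_{\mathbb{R}^2}}\tilde f\bigr)(x,y-2\pi k),
\]
so the bilinear $uv$ becomes a double sum of $\mathbb{R}^2$ bilinears over pairs of translates. Because the group velocity of frequencies in a cube of side $R$ spans a range $\sim R$ in the torus direction, wave packets wrap around the torus on time scale $R^{-1}$, and on $[0,T_pR^{-\varepsilon}]$ the number of interacting translates is $\sim RT_p=R^{(3p-5)/(3-p)-\varepsilon}$. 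The specific value $T_p=R^{(4p-8)/(3-p)}$ is designed so that the combined effect of Tao's Euclidean bound and this wrapping count exactly reproduces the target exponent $R^{(2p-4)/p}$ with an $R^\varepsilon$ slack.

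The main obstacle is carrying out the summation over translates without polynomial loss in $R$. A direct $L^p$ triangle inequality over the $\sim R^{(3p-5)/(3-p)}$ translates would cost a factor of order $R^{(3p-5)/(p(3-p))}$, which is far larger than the allowed $R^\varepsilon$ once $p>5/3$. The key will be to exploit the transversality produced by the $R$-separation of $Q_1,Q_2$: wave packets from the two cubes have group velocities differing by $\sim R$, so the space-time region on which a given pair can meaningfully interact is restricted. Combined with the intrinsic orthogonality in $y$ of individual wave packets (transverse width $\sim 1/R$) and the paraboloid structure of the space-time Fourier support, this should make the summation essentially free up to the time $T_p$, beyond which the number of wrappings outstrips what transversality can absorb, genuinely breaking the Euclidean bilinear sharpness and matching the saturating example mentioned in the abstract.
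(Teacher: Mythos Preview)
Your proposal identifies the right target and the right obstacle, but it stops exactly where the real work begins. You correctly observe that a naive $L^p$ triangle inequality over the $\sim RT_p$ wrappings loses a polynomial power of $R$, and you then assert that transversality and wave-packet orthogonality ``should make the summation essentially free.'' No mechanism is given for this, and in fact there is no way to recover the loss by summing Tao's \emph{output} bilinear $L^p$ estimate over translates: once you have passed to the scalar bound $R^{(2p-4)/p}\|\tilde f\|_2\|\tilde g\|_2$ for each pair of copies, the information needed to exploit transversality between different copies has been discarded.

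The paper avoids this summation entirely by going one level deeper into Tao's argument. After an $\varepsilon$-removal lemma (which you do not mention, and which is what trades the open range $p>5/3$ for the $R^{-\varepsilon}$ in the time interval) and a rescaling to $\mathbb{R}\times\mathbb{T}_R$, the problem is reduced to a local estimate on a waveguide cube $B_r=[0,r]\times[0,R]\times[0,r]$ with $R\le r\le R^{(2p-2)/(3-p)}$. A wave packet decomposition on $\mathbb{R}\times\mathbb{T}_R$ is built so that the coefficients are \emph{periodic} in the $\mathbb{T}_R$ direction; Poisson summation appears here, inside the construction of the packets, not at the level of the solution. The bilinear estimate is then phrased as a tube-incidence bound on $B_r$, and periodicity lets one replace $B_r$ by the full Euclidean cube $[0,r]^3$ at the cost of replacing the single-period tube family by its periodic extension. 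The crucial point is that Tao's incidence estimate \eqref{E:Tao bilinear-wave packets} applies to \emph{arbitrary} transverse tube collections, so the extra periodic copies are absorbed for free into the count $\#\mathcal{T}_{i,[0,r]^3}$; the factor $(r/R)^{1/p'}$ that appears is exactly compensated by the ratio of single-period to multi-period tube counts. In short, the missing idea in your proposal is to apply Tao's theorem at the wave-packet/incidence level rather than at the $L^p$-estimate level, so that the periodic copies become additional tubes in a single application instead of separate terms to be summed.
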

\begin{remark}
Under the same assumption of Theorem \ref{thm: main}, it follows directly from \eqref{eq: mainestimate} that
\begin{equation}\label{eq: almostmain}
\|e^{it\Delta}P_{Q_{1}}fe^{it\Delta}P_{Q_{2}}g\|_{L_{x,t}^{p}(\mathbb{R}\times \mathbb{T}\times [0,T_{p}])}\lesssim_{\varepsilon} R^{\frac{2p-4}{p}+\varepsilon}\|f\|_{L_{x}^{2}}\|g\|_{L_{x}^{2}}.
\end{equation}
And indeed, for \eqref{eq: almostmain}, we can also cover the endpoint $p=\frac{5}{3}$.\\

But it should be noted that \eqref{eq: almostmain} cannot imply \eqref{eq: mainestimate} and it, in some senses, loses derivative and is not very useful when one studies $L^{2}$-critical nonlinear Schr\"odinger equations (NLS) problems\footnote{We refer to \cite{dodson2012global} for the notion of $L^{2}$-criticality where the long time behavior for $L^{2}$-critical NLS is under studied.}.
\end{remark}

One may observe $T_{p}$ in Theorem \ref{thm: main} equals $R^{-1}$ when $p=\frac{5}{3}$, and goes to $1$ as $p\rightarrow 2$. The two endpoints are both special. When $p=\frac{5}{3}$ and $T_{p}=\frac{1}{R}$, the estimate falls in the so-called self similar regime, and estimate \eqref{eq: mainestimate} is expected to follow from the Euclidean case, \cite{Tao2003} (since within this time scale the Schr\"odinger wave does not quite see the periodic effect of $\mathbb{T}$), and the associated estimate has been studied in \cite{Deng2023}, but in the spirit of \cite{TVV1998}, that is
\begin{equation}\label{eq:bilestDeng}
\|e^{it\Delta}P_{Q_{1}}fe^{it\Delta}P_{Q_{2}}g\|_{L_{x,t}^{2}(\mathbb{R}\times \mathbb{T}\times [0,R^{-1}])}\lesssim R^{\frac{2q-4}{q}}\|\widehat{f}\|_{L^{q}(\bR\times \bZ)}\|\widehat{g}\|_{L^{q}(\bR\times \bZ)}, \quad q=\frac{12}{7}.
\end{equation}

Up to $\varepsilon$-loss, the above estimate can be deduced by interpolating \eqref{eq: almostmain} for the $p=\frac{5}{3}$ and an $L^1\times L^1 \to L^{\infty}$ estimate. It should be noted that \eqref{eq:bilestDeng} does not suffer from the $\varepsilon$-loss, and is more useful within the semiclassical time scale $T\lesssim \frac{1}{R}$.

 When $p=2$, Theorem \ref{thm: main} follows directly from the Strichartz estimate on waveguide $\mathbb{R}\times \mathbb{T}$, \cite{TT2001} (see also \cite{barron2020global}).

The \textbf{main point} of our estimate \eqref{eq: mainestimate} is that $p$ can be smaller than $2$, and $T_{p}$ can be pushed to $1$ as $p$ approaches $2$, and in particular, $T_{p}\gg \frac{1}{R}$ for all $p>\frac{5}{3}$. We also note that estimate \eqref{eq: mainestimate} is \textbf{sharp} in the sense that one can construct examples which saturate this estimate. See Appendix B for these examples.

\subsection{Background, motivations and previous works}
To explain the background explicitly, one needs to consider \eqref{eq: lsrt} also on $\mathbb{R}^{2}$ (Euclidean space), and $\mathbb{T}^{2}$ (periodic space, i.e. 2D tori). In this subsection, we will use $e^{it\Delta_{\mathbb{R}\times \mathbb{T}}}$, $e^{it\Delta_{\mathbb{R}^{2}}}$, $e^{it\Delta_{\mathbb{T}^{2}}}$ to denote linear Schr\"odinger operators on different manifolds to avoid unnecessary confusions.

Strichartz estimates for those linear models play a fundamental role in the study of corresponding nonlinear problems.
In $\mathbb{R}^{2}$, see, \cite{KT1998,Cazenave2003}, it reads as
\begin{equation}\label{eq: strr}
\|e^{it\Delta_{\bR^{2}}}f\|_{L_{x,t}^{4}(\bR^{2}\times \bR)}\lesssim \|f\|_{L^{2}(\bR^2)}.
\end{equation}
The parallel estimate on $\mathbb{T}^{2}$ (rational or irrational), see \cite{Bourgain1993,BD2015}, reads as
\begin{equation}\label{eq: sttt}
\|e^{it\Delta_{\bT^{2}}}P_{N}f\|_{L_{x,t}^{4}(\bT^{2}\times [0,1])}\lesssim_{\varepsilon} N^{\varepsilon}\|f\|_{L^{2}(\bT^2)}.
\end{equation}
Estimate \eqref{eq: sttt} is weaker than \eqref{eq: strr}, but optimal\footnote{See Appendix in \cite{TT2001}.}. Note that it only holds local in time and loses $\varepsilon$ derivative. And in particular, it makes the study of 2D cubic (mass-critical) NLS ($iu_{t}+\Delta u=+|u|^{2}u$) at $L^{2}$ regularity on torus out of reach at the moment. Meanwhile, global well-posedness (GWP) and scattering of (defocusing) mass-critical NLS on Euclidean space is one landmark result in dispersive PDEs, see \cite{dodson2012global,dodson2015global,dodson2016global}.

The parallel estimate on $\mathbb{R}\times \mathbb{T}$ can still only hold local in time, but with no loss of derivative, \cite{TT2001},
\begin{equation}\label{eq: strt}
\|e^{it\Delta_{\bR\times \bT}}f\|_{L_{x,t}^{4}(\bR\times \bT \times [0,1])}\lesssim \|f\|_{L^{2}(\bR \times \bT)}.
\end{equation}
In some sense, this makes $\mathbb{R}\times \mathbb{T}$ an interesting manifold to study mass-critical NLS, and the associated Strichartz estimate which does \textbf{not lose} derivative.

Barron \cite{Barron2021} proves global-in-time Strichartz estimates for Schr\"odinger equations on product spaces $\mathbb{R}^n \times \mathbb{T}^m$ with an additional $\varepsilon$-derivative loss via the $\ell^2$-decoupling method. This loss is significant at the critical endpoint of the Strichartz estimates\footnote{Interestingly, Barron shows that this $\varepsilon$-loss can be removed for exponents away from the critical endpoint. This removal enhances the strength of the estimates for those specific exponents.}. These Strichartz estimates are then applied to prove small-data-scattering at the scaling critical regularity for certain nonlinear NLS models. Moreover, Barron-Christ-Pausader's recent work \cite{barron2020global} establishes global-in-time\footnote{Compared with \cite{TT2001}, the estimate in \cite{barron2020global} is global-in-time.} Strichartz estimates for Schr\"odinger equations on product spaces $\mathbb{R} \times \mathbb{T}$, overcoming derivative losses.

It is also natural to study the bilinear version of estimates \eqref{eq: strr},\eqref{eq: sttt},\eqref{eq: strt}.

One direction is to consider $\|e^{it\Delta}P_{N}fe^{it\Delta}P_{M}g\|_{L_{x,t}^{2}}$, i.e. one exploits extra smallness when $f,g$ are localized at different (dyadic) frequencies $N,M$. It indicates direct applications for low regularity GWP results of NLS, we refer to \cite{Bourgain1998,CGSY2023,DFYZZ2024,DPST2007,FSWW2018,ZZ2021}\footnote{We also refer to the Introduction in \cite{DFYZZ2024} for a brief introduction for the topic ``NLS on waveguide manifolds'' (see also the references therein), which has been intensively studied in recent decades. We do not emphasize this point since we focus on the bilinear estimates in this paper.}.

Another direction is to study bilinear restriction estimates as in \cite{Tao2003}. And in $\mathbb{R}^{2}$, it reads as
\begin{equation}\label{eq: taobi}
\|e^{it\Delta_{\mathbb{R}^{2}}}P_{Q^{'}_{1}}fe^{it\Delta_{\mathbb{R}^{2}}}P_{Q^{'}_{2}}g\|_{L_{x,t}^{p}(\bR^2 \times \bR)}\lesssim \|f\|_{L_x^{2}}\|g\|_{L_x^{2}}, \quad \frac{5}{3}< p< 2.
\end{equation}
Here $Q^{'}_{1},Q^{'}_{2}$ are two cubes of scale $1$, and they are separated by distance $1$.

Estimate \eqref{eq: taobi} is of great importance in the study of modern harmonic analysis, see \cite{Tao2001,Tao2003,TVV1998,Wolff2001}. And it is directly related to the concentration compactness theory in dispersive PDEs, see \cite{Bourgain1998,bahouri1999high,kenig2006global,kenig2006global,merle1998compactness}. And one may refer to \cite{BV2007} for a systematic approach to transfer estimate \eqref{eq: taobi} into a profile decomposition.

By the scaling symmetry of $\mathbb{R}^{2}$, estimate \eqref{eq: taobi} is equivalent to
\begin{equation}\label{eq: taobirescale}
\|e^{it\Delta_{\bR^2}}P_{Q_{1}}fe^{it\Delta_{\bR^2}}P_{Q_{2}}g\|_{L_{x,t}^{p}(\bR^2 \times \bR)}\lesssim R^{\frac{2p-4}{p}}\|f\|_{L_{x}^{2}}\|g\|_{L_{x}^{2}}.	
\end{equation}
Here $Q_{1},Q_{2}$ are two cubes of scale $R\ge 1$, and they are separated by distance $R$.

The goal of the current article is to study the analogous estimate of \eqref{eq: taobirescale} in waveguide $\mathbb{R}\times \mathbb{T}$, and hopefully it can be helpful to further applied to implement the scheme of \cite{IPS2012,ionescu2012energy} to transfer the (large data) GWP results in the Euclidean setting \cite{Dodson2}. for mass-critical NLS to the waveguide $\mathbb{R}\times \mathbb{T}$ case.

\subsection{More results and some preliminary reductions}
Theorem \ref{thm: main} estimates bilinear interactions of Schr\"odinger wave localizing at frequency $|\xi|\sim R$ up to time scale $T_{p}:=R^{\frac{4p-8}{3-p}}$. One may wonder what can be said for time scale $T_{p}\leq T\leq 1$. Via interpolation Theorem \ref{thm: main} with (the rather straightforward) bound
\begin{equation}
\|e^{it\Delta}fe^{it\Delta}g\|_{L_{x,t}^{1}(\mathbb{R}\times \mathbb{T}\times [0,T])}\lesssim T \|f\|_{L_{x}^{2}}\|g\|_{L_{x}^{2}},
\end{equation}
one obtains the following corollary,
\begin{coro} \label{T:Full bilinear}
 Let $Q_{1}, Q_{2}$ be two cubes of length $R\ge 1$ on  $\mathbb{R}\times \mathbb{Z}$, and we assume that they are $R$ separated. Then for $p\in (\frac{5}{3},2]$, $T\in [T_p,1]$, and all $f,g\in L^{2}(\mathbb{R}\times \mathbb{T})$, there holds
\[\|e^{it\Delta}P_{Q_{1}}fe^{it\Delta}P_{Q_{2}}g\|_{L^p(\bR \times \bT \times [0,TR^{-\varepsilon}])} \lesssim_\varepsilon T^{\frac{3-p}{2p}} \|f\|_{L_{x}^{2}}\|g\|_{L_{x}^{2}}, \quad \forall \varepsilon>0.\]
For $p\in (\frac{5}{3},2]$, $T\in [R^{-2}, T_p]$, there holds
\[\|e^{it\Delta}P_{Q_{1}}fe^{it\Delta}P_{Q_{2}}g\|_{L^p(\bR \times \bT \times [0,TR^{-\varepsilon}])} \lesssim_\varepsilon R^{\frac{2p-4}{p}} \|f\|_{L_{x}^{2}}\|g\|_{L_{x}^{2}}, \quad \forall \varepsilon>0.\]
For $p \in [1, \frac{5}{3}]$, $T \in [R^{-1},1]$, there holds
\[\|e^{it\Delta}P_{Q_{1}}fe^{it\Delta}P_{Q_{2}}g\|_{L^p(\bR\times \bT \times[0,T  ])} \lesssim_\varepsilon T^{\frac{3-p}{2p}-\varepsilon} \|f\|_{L_{x}^{2}}\|g\|_{L_{x}^{2}}, \quad \forall \varepsilon>0.\]
For $p \in [1, \frac{5}{3}]$, $T \in [R^{-2},R^{-1}]$, there holds
\[\|e^{it\Delta}P_{Q_{1}}fe^{it\Delta}P_{Q_{2}}g\|_{L^p(\bR\times \bT \times[0,T])} \lesssim_\varepsilon T^{\frac{5p-3}{2p}-\varepsilon} R^{\frac{1-p}{p}} \|f\|_{L_{x}^{2}}\|g\|_{L_{x}^{2}}, \quad \forall \varepsilon>0.\]
\end{coro}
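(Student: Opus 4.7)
The approach is pure Riesz-Thorin interpolation between Theorem \ref{thm: main} (together with the Strichartz estimate \eqref{eq: strt} at the endpoint $p=2$ and the almost-main bound \eqref{eq: almostmain} at the endpoint $p=\frac{5}{3}$) and the trivial bilinear estimate
\[\|e^{it\Delta}fe^{it\Delta}g\|_{L^1_{x,t}(\bR\times\bT\times[0,T])}\leq T\|f\|_{L^2}\|g\|_{L^2},\]
which follows at once from Cauchy-Schwarz in $x$ and mass conservation. Once these two inputs are in hand, the content of the corollary is almost entirely algebraic.

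The key reparametrization is that $T_{p_1}=R^{(4p_1-8)/(3-p_1)}$ gives $R^{(2p_1-4)/p_1}=T_{p_1}^{(3-p_1)/(2p_1)}$, so Theorem \ref{thm: main} reads, in a time-centric form,
\[\|e^{it\Delta}P_{Q_1}fe^{it\Delta}P_{Q_2}g\|_{L^{p_1}(\bR\times\bT\times[0,T_{p_1}R^{-\varepsilon}])}\lesssim T_{p_1}^{(3-p_1)/(2p_1)}\|f\|_{L^2}\|g\|_{L^2}.\]
Since $q\mapsto T_q$ is continuous and increases from $R^{-1}$ to $1$ as $q$ ranges over $[\frac{5}{3},2]$, for each target pair $(p,T)$ in case (1) or case (3) I will select an interpolation exponent $p_1=p_1(T)\in[\max\{p,\frac{5}{3}\},2]$ by the rule $T_{p_1}=T$, and then interpolate the resulting $L^{p_1}$-bound against the trivial $L^1$-bound on the same spacetime domain $\bR\times\bT\times[0,TR^{-\varepsilon}]$. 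With $\theta$ determined by $1/p=\theta+(1-\theta)/p_1$, a short algebraic simplification (starting from $\theta=(p_1-p)/(p(p_1-1))$) shows
\[\theta+(1-\theta)\frac{3-p_1}{2p_1}=\frac{3-p}{2p},\]
independently of $p_1$; this is precisely the exponent appearing in cases (1) and (3).

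Case (2) is immediate from monotonicity in the time interval: $T\leq T_p$ means the main theorem on $[0,T_pR^{-\varepsilon}]$ already controls the left-hand side on the subinterval $[0,TR^{-\varepsilon}]$. In case (3) the only new wrinkle is that when the selected $p_1$ equals $\frac{5}{3}$ one must replace Theorem \ref{thm: main} by \eqref{eq: almostmain}, which costs an $R^\varepsilon$ that is absorbed as $T^{-\varepsilon}$ thanks to $T\geq R^{-1}$, explaining the advertised extra $-\varepsilon$. For case (4), the target time $T\leq R^{-1}$ lies below the critical scale for every admissible $p_1$, so I would instead freeze at $p_1=\frac{5}{3}$, use the ``flat'' consequence $\|e^{it\Delta}P_{Q_1}fe^{it\Delta}P_{Q_2}g\|_{L^{5/3}([0,T])}\lesssim_\varepsilon R^{-2/5+\varepsilon}$ obtained from \eqref{eq: almostmain} on $[0,R^{-1}]\supset[0,T]$ by restriction, and interpolate against the trivial $L^1([0,T])$ bound $\leq T$; the resulting $T^\alpha R^\beta$ scaling at exponent $p$ is then a direct Riesz-Thorin computation, with the residual $R^\varepsilon$ absorbed as $T^{-\varepsilon}$ using $T\geq R^{-2}$. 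No genuine analytic obstacle arises; the only nontrivial step is the algebraic cancellation in the interpolated temporal exponent used in cases (1) and (3).
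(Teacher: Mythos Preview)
Your overall strategy---interpolate the main theorem (in its various guises) against the trivial $L^1$ bound, with the interpolation endpoint $p_1$ chosen so that $T_{p_1}=T$---is exactly what the paper does, and your key algebraic identity
\[
\theta+(1-\theta)\,\frac{3-p_1}{2p_1}=\frac{3-p}{2p}
\]
is correct and is really the whole content of cases (1)--(2).

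There is, however, a genuine sign error in your treatment of case (3). You write that the residual $R^\varepsilon$ ``is absorbed as $T^{-\varepsilon}$ thanks to $T\ge R^{-1}$.'' But $T\ge R^{-1}$ says $T^{-1}\le R$, hence $T^{-\varepsilon}\le R^\varepsilon$: the inequality goes the wrong way. Concretely, take $T$ bounded away from $0$ (say $T=\tfrac12$) and let $R\to\infty$; then $T^{-\varepsilon}$ stays bounded while $R^\varepsilon\to\infty$, so no absorption is possible. Note also that the $R^\varepsilon$ loss is not confined to the endpoint $p_1=\tfrac53$: for every $p_1\in(\tfrac53,2)$, Theorem~\ref{thm: main} only controls the norm on $[0,T_{p_1}R^{-\varepsilon}]$, so passing to $[0,T]$ (as case (3) requires) always costs a factor $R^{\varepsilon/p}$ via time-translation and summation. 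Your argument therefore does not close uniformly in $R$ when $T$ is of order~$1$; more care is needed here (e.g.\ combining the $p_1$-dependent interpolation for $T\le R^{-\delta}$ with the $p_1=2$ Strichartz interpolation for $T\ge R^{-\delta}$, and tracking how the two bounds match).

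In case (4) you make the analogous slip, citing $T\ge R^{-2}$ to absorb $R^\varepsilon$. The relevant inequality is instead the \emph{upper} bound $T\le R^{-1}$, which gives $R\le T^{-1}$ and hence $R^\varepsilon\le T^{-\varepsilon}$; with this correction your argument for case (4) does go through.
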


\begin{remark} The above estimate is sharp up to $\varepsilon$-loss, see Appendix B. Our methods can also be applied to get similar results for general waveguide $\bR^m\times \bT^n$($m,n\ge 1$), with some suitable modifications.
\end{remark}

\begin{remark}
As mentioned in \cite{Tao2003}, it is expected to apply Corollary \ref{T:Full bilinear} to obtain a \textbf{pointwise convergence} result for the Schr\"odinger equation on $\bR\times \bT$ in a standard way. There has been intensive studies for the pointwise convergence in Euclidean situation, and we refer to \cite{Bourgain2016,DGL2017,DZ2019} for some classical results. We thank Prof. Jiqiang Zheng for helpful communications on this point.
\end{remark}

We reformulate \eqref{eq: mainestimate} in the formal of extension operators. The study of free Schr\"odinger solution is directly related to the restriction problem in harmonic analysis, see for instance \cite{Guth2023} and the reference therein. This is because the space-time Fourier transform of the linear Schr\"odinger equation is supported in the paraboloid. And that is why estimate \eqref{eq: taobi} are usually called bilinear restriction estimates in the literature. Let extension operator $\tilde{E}$ be defined as
\begin{equation}
(\tilde{E}h)(x, t):=\int_{\mathbb{R}^{2}}h(\xi)e^{ix\xi + it|\xi|^{2}} \ddd\xi.
\end{equation}
Here $x\in \mathbb{R}^{2}, \xi\in \mathbb{R}^{2}$. Note that one has $e^{-it\Delta_{\bR^2}}f\equiv \tilde{E}\widehat{f}$.\\

Let $Q$ be a cube in $\mathbb{R}^{2}$, one naturally define $\tilde{E}_{Q}h$ as $\tilde{E}(\chi_{Q}h)$.

Let $Q_{1}'$, $Q_{2}'$ be as in \eqref{eq: taobi}, then \eqref{eq: taobi} is equivalent to
\begin{equation}
\|E_{Q_{1}'}h_{1}E_{Q_{2}'}h_{2}\|_{L_{x, t}^{p}}\lesssim \|h_{1}\|_{L^{2}_{\xi}}\|h_{2}\|_{L_{\xi}^{2}}.
\end{equation}
And one may also reformulate \eqref{eq: taobirescale} in a similar fashion.

We now set up the extension operator in the setting of waveguide, i.e. we define the extension opearator $E$ as

\begin{equation}\label{eq: extension}
Eh(x,t):= \int_{\bR\times \bZ} e^{ix\xi+it|\xi|^2} h(\xi) \ddd \xi \equiv \sum_{\xi_2\in \bZ} \int_{\bR} e^{i(x_1\xi_1+x_2\xi_2) + it(\xi_1^2 + \xi_2^2)} h(\xi_1,\xi_2) \ddd\xi_1,
\end{equation}
where $x\in \bR\times \bT$, $t\in \bR$ and $h\in L^2(\bR \times \bZ)$.
Strictly speaking, we should use $e^{2\pi ix \xi}$ rather than $e^{ix\xi}$ in \eqref{eq: extension}, we neglect this issue for notation simplicity.

And one defines $E_{Q}h$ as $E(\chi_{Q}h)$.

Now, let $Q_{1}, Q_{2}$ be two cubes of scale $R$, and separated by distance $\sim R$ as in Theorem \ref{thm: main}, the main estimate \eqref{eq: mainestimate} can be \textbf{rewritten} as
\begin{equation} \label{T:Key bilinear-11}
		\l\|E_{Q_1} h_1 E_{Q_2} h_2 \r\|_{L_{x,t}^p(\bR\times \bT \times [0,T_{p}R^{-\varepsilon}])} \lesssim R^{\frac{2p-4}{p}} \|h_1\|_{L^2(\bR\times \bZ)} \|h_2\|_{L^2(\bR \times \bZ)},
	\end{equation}
where $h_1, h_2\in L^2(\bR\times \bZ)$. We now focus on estimate \eqref{T:Key bilinear-11} in the rest of the article.

\subsection{Structure of the article}
The rest of the article is structured as follows. We end this section with some notations which will be used later. In Section \ref{S:Overview}, we recall some classical Euclidean bilinear estimates of Tao \cite{Tao2003}, and then state the outline of our bilinear estimate on waveguide manifold. In Section \ref{S:Proof of main results}, we present the detailed proof of our main result Theorem \ref{thm: main}. Finally, we show a waveguide version epsilon-removal lemma in Appendix A, and then show the aforementioned sharpness examples in Appendix B.

\subsection{Notations} \label{SubS:Notations}
We write $A \lesssim B$ to say that there is a constant $C$ such that $A\leq CB$. We use $A \sim B$ when $A \lesssim B \lesssim A $. Particularly, we write $A \lesssim_u B$ to express that $A\leq C(u)B$ for some constant $C(u)$ depending on $u$.

Then we give some more preliminaries in the setting of waveguide manifold. Throughout this paper, we regularly refer to the spacetime norms
\begin{equation}
    \|u\|_{L^q_xL^p_t(\mathbb{R}^m\times \mathbb{T}^n \times I)} :=\left(\int_{I}\left(\int_{\mathbb{R}^m\times \mathbb{T}^n} |u(t,x)|^q dx \right)^{\frac{p}{q}} dt\right)^{\frac{1}{p}}.
\end{equation}
Moreover, we turn to the Fourier transformation and Littlewood-Paley theory. We define the Fourier transform on $\mathbb{R}^m \times \mathbb{T}^n$ as follows:
\begin{equation}
    \widehat{f}(\xi) := \int_{\mathbb{R}^m \times \mathbb{T}^n}f(x)e^{-ix\xi} \ddd x,
\end{equation}
where $\xi=(\xi_1,\xi_2,...,\xi_{d})\in \mathbb{R}^m \times \mathbb{Z}^n$ and $d=m+n$. We also note the Fourier inversion formula
\begin{equation}
    f(x)=c \sum_{(\xi_{m+1},...,\xi_{d})\in \mathbb{Z}^n} \int_{(\xi_1,...,\xi_{m}) \in \mathbb{R}^m} \widehat{f}(\xi)e^{ix \xi} \ddd\xi_1\ldots\ddd\xi_m.
\end{equation}
Moreover, we define the Schr{\"o}dinger propagator $e^{it\Delta}$ by
\begin{equation}
    \widehat{e^{it\Delta}f} (\xi) :=e^{it|\xi|^2}\widehat{f}(\xi).
\end{equation}
We are now ready to define the Littlewood-Paley projections. First, we fix $\eta_1: \mathbb{R} \rightarrow [0,1]$, a smooth even function satisfying
\begin{equation}
    \eta_1(\xi) =
\begin{cases}
1, \ |\xi|\le 1,\\
0, \ |\xi|\ge 2,
\end{cases}
\end{equation}
and $N=2^j$ a dyadic integer. Let $\eta^d=\mathbb{R}^d\rightarrow [0,1]$, $\eta^d(\xi)=\eta_1(\xi_1)\eta_1(\xi_2)\eta_1(\xi_3)...\eta_1(\xi_d)$. We define the Littlewood-Paley projectors $P_{\leq N}$ and $P_{ N}$ by
\begin{equation}
    \widehat{P_{\leq N} f}(\xi):=\eta^d\left(\frac{\xi}{N}\right) \widehat{f}(\xi), \quad \xi \in \mathbb{R}^m \times \mathbb{Z}^n,
\end{equation}
and
\begin{equation}
P_Nf:=P_{\leq N}f-P_{\leq \frac{N}{2}}f.
\end{equation}
Littlewood-Paley projection $P_{Q}$ can be also defined in a natural way where $Q$ is a cube on $\mathbb{R}\times \mathbb{Z}$.

Let the rescaled torus be defined as  $\bT_{R} := R\bT$ for $R\geq 1$. Correspondingly $\bZ_{1/R}:= R^{-1}\bZ$. We define\[\int_{\bR\times \bZ_{1/R}} h(\xi) \ddd \xi := R^{-1} \sum_{\xi_2 \in \bZ_{1/R}} \int_{\bR} h(\xi_1,\xi_2) \ddd \xi_1,\]
and the rescaled extension operator $E^R$ as follows
\[E^R h(x,t):= \int_{\bR \times \bZ_{1/R}} e^{ix\xi + it |\xi|^2} h(\xi) \ddd \xi.\]
For a subset $Q^{'} \subset \bR \times \bZ_{1/R}$, we introduce the operator $E_{Q^{'}}^R$ as
\[E_{Q^{'}}^R h(x,t):= \int_{Q^{'}} e^{ix\xi + it |\xi|^2} h(\xi) \ddd \xi.\]
Based on these notations, notice that
\[E_{Q} h\l(\frac{x}{R}, \frac{t}{R^2}\r) = RE_{Q^{'}}^R h_R(x,t), \quad h_R(\xi):= Rh(R\xi), \quad Q^{'}:= R^{-1} Q.\]
For a finite set $S$, we use the notation $\# S$ to denote the number of its elements.

\subsection*{Acknowledgment}
B. Di was supported by the Postdoctoral Fellowship Program of CPSF under Grant Number GZB20230812. C. Fan was partially supported by the National Key R\&D Program of China, 2021YFA1000800, CAS Project for Young Scientists in Basic Research, Grant No.YSBR-031, and NSFC grant no.12288201. Z. Zhao was supported by the NSF grant of China (No. 12101046, 12271032) and the Beijing Institute of Technology Research Fund Program for Young Scholars.

\section{An overview}\label{S:Overview}
\subsection{A recall of Tao's bilinear estimates on Euclidean space}
In this subsection, due to the aforementioned connections between free (linear) Schr\"odinger equations and Fourier restriction operators, we recall some of Tao's classical bilinear restriction estimates on Euclidean space (\cite{Tao2003}). There are mainly three steps summarized as follows.

\textbf{Step 1:} As shown in the literature \cite{Tao2003,TVV1998,Wolff2001}, to establish some desired bilinear restriction estimates, one typical method is to establish a local estimate (which may have $\varepsilon$-loss but have endpoint index) and then apply an $\varepsilon$-removal lemma. More precisely, to establish the desired global-type bilinear restriction estimate \eqref{eq: taobi}, it suffices to show that the following local-type bilinear restriction estimate
\begin{equation} \label{E:Tao local bilinear}
	\l\|e^{it\Delta_{\mathbb{R}^{2}}}P_{Q^{'}_{1}}f e^{it\Delta_{\mathbb{R}^{2}}}P_{Q^{'}_{2}}g \r\|_{L_{x,t}^{\frac{5}{3}}(B_r(x_0,t_0))}\lesssim C_{\alpha} r^{\alpha} \|f\|_{L_{x}^{2}}\|g\|_{L_{x}^{2}}
\end{equation}
holds for all spacetime balls $B_r(x_0,t_0)$ of radius $r$ and all $\alpha>0$. Note that this kind of reductions unavoidably lead to some non-endpoint index (non-sharp) phenomena.

\textbf{Step 2:} Next, based on this local-type reduction in the first step, one need to further investigate the local-type bilinear estimate \eqref{E:Tao local bilinear}. Here one typical tool is called \textit{wave packet decomposition}, which can be viewed as a quantity version of the uncertainty principle. Roughly speaking, wave packet decomposition states that the solution $e^{it\Delta_{\mathbb{R}^2}} f$ can be decomposed as a summation of wave packets $\phi_{T}$, which have good localization in both physical space and frequency space. Formally, all these wave packets $\phi_T$ can be viewed as characteristic functions of tubes $T$, which have base radii of $r$ and lengths of $r^2$. More precisely, there exists a wave packet decomposition
\begin{equation*}
	e^{it\Delta_{\mathbb{R}^{2}}}P_{Q^{'}_{1}}f (x)=\sum_{T_1 \in \mathcal{T}_1^{'}} c_{T_1} \phi_{T_1}(x,t),
\end{equation*}
where $\mathcal{T}_1^{'}$ is a collection of tubes that is associated to the cube $Q_1^{'}$, and the coefficients $c_{T_1}$ obey the following $\ell^2$ bound $\|c_{T_1}\|_{\ell^2} \lesssim \|f\|_{L_x^2}$. Similar decomposition also holds for the item $e^{it\Delta_{\mathbb{R}^{2}}}P_{Q^{'}_{2}}g$. Thus by putting the wave packet decomposition into the desired estimate \eqref{E:Tao local bilinear}, one can reduce this desired estimate to a tube-type (wave packet) estimate. Note that these kind of reductions can transfer the integral-type problem into a tube-type (wave packet) problem.

\textbf{Step 3:} Moreover, based on this \textit{tube-type reduction} in the second step, one need to further investigate the associated \textit{tube-type estimate}. To do so, under certain transverse condition, Tao has established an incidence estimate of tubes, which is his main conclusion in \cite{Tao2003}. More precisely, to show the desired local bilinear restriction estimate \eqref{E:Tao local bilinear}, Tao established an incidence estimate on wave packets, which claims that the following estimate
\begin{equation} \label{E:Tao bilinear-wave packets 5/3}
	\l\|\sum_{T_1\in \mathcal{T}_1^{'}} \sum_{T_2\in \mathcal{T}_2^{'}} \phi_{T_1} \phi_{T_2}\r\|_{L_{x,t}^{\frac{5}{3}}([0,r]^3)} \lesssim C_{\alpha} r^{\alpha} \l(\# \mathcal{T}_{1,[0,r]^3}^{'} \r)^{1/2} \l(\# \mathcal{T}_{2,[0,r]^3}^{'} \r)^{1/2}
\end{equation}
holds for all radii $r>0$ and all parameters $\alpha>0$. Here the collections $\mathcal{T}_1^{'}$ and $\mathcal{T}_2^{'}$ satisfy certain transverse condition, due to the geometric property of paraboloid; and the notation $\mathcal{T}_{i, [0,r]^3}^{'}$ are defined as
\[\mathcal{T}_{i, [0,r]^3}^{'} := \{T: T\in \mathcal{T}_i^{'}, T\cap [0,r]^3 \neq \emptyset\}.\]
We point out that this incidence estimate is highly nontrivial, and is proved by induction on scales with the help of certain combinatorial argument. In this article, we will use this incidence-type estimate as a \textit{blackbox}.

We \textbf{emphasize} that the aforementioned wave packet decomposition is very important for us, since it helps us connect Schr\"odinger wave on $\bR^2$ and on $\bR\times \bT$, through incidence-type estimate. This intuition will be achieved later in the proof of our main result.

\subsection{Overview of our proof}
Here we show the outline of our proof, which basically follows the aforementioned Euclidean steps. However, some special phenomena have appeared due to our waveguide $\bR\times \bT$ setting. Thus there need some necessary adjustments to handle these phenomena.

First, we establish a waveguide version $\varepsilon$-removal lemma, which will lead to the $\varepsilon$-loss in our main estimate \eqref{eq: mainestimate}. In summary, applying the local-type reduction in the waveguide setting, we not only lose the endpoint Lebesgue space index but also lose the $R^{-\varepsilon}$ time regime. That is why we have $\varepsilon$-loss in our main Theorem \ref{thm: main}. To state the waveguide version $\varepsilon$-removal argument more precisely, we naturally introduce the cubes $B_r$ in rescaled-waveguide $\bR \times \bT_{R} \times \bR$ as follows
\[B_r:=[0,r]\times [0,r] \times [0,r] \;\;\; \text{for} \;\; r\leq R, \quad B_r:=[0,r]\times [0,R] \times [0,r] \;\;\;\text{for}\;\; r>R.\]
Then, by the waveguide version $\varepsilon$-removal lemma and some rescaled arguments, we can reduce our desired global-type estimate \eqref{T:Key bilinear-11} to the following local-type bilinear estimate
\[\l\|E_{Q^{'}_1}^R h_1 E_{Q^{'}_2}^R h_2 \r\|_{L_{x,t}^p\l(B_r\r)} \lesssim C_{\alpha} r^{\alpha} \|h_1\|_{L^2(\bR\times \bZ_{1/R})} \|h_2\|_{L^2(\bR \times \bZ_{1/R})}, \quad \forall r\in [1,R^{\frac{2p-2}{3-p}}].\]
Note that the waveguide cube $B_r$ is different from the classical Euclidean cubes when $r>R$. This fact prevents us from directly applying Tao's classical incidence estimate. We will deal with this difference later, by making use of certain periodic property.

Second, we establish a waveguide version wave packet decomposition, which has some periodic properties due to the waveguide $\bR\times \bT$ setting. Roughly speaking, the waveguide version wave packet decomposition satisfies all the properties of Euclidean version wave packet decomposition, and further satisfies one more periodic property in the second spatial direction due to the half-periodic manifold $\bR\times \bT$. More precisely, there exists a decomposition
\begin{equation*}
	E_{Q_1^{'}}^{R} h_1 =\sum_{T_1} \widetilde{c}_{T_1} \psi_{T_1},
\end{equation*}
where $T_1$ ranges over a collection that is associated to the cube $Q_1^{'}$, and the coefficients $\widetilde{c}_{T_1}$ obey the $\ell^2$ bound $\|\widetilde{c}_{T_1}\|_{\ell^2} \lesssim \|h_1\|_{L_\xi^2}$. Moreover, for each fixed $T_1$, the function $\psi_{T_1}$ can be viewed as a characteristic function of the following periodic-type tube
\[\bigcup_{m\in\bZ} \l[T_{1}+ (0,mR,0) \r],\]
where each translated tube $T_{1}+ (0,mR,0)$ has base radial $r$ and length $r^2$. Similar decomposition also holds for the item $E_{Q_2^{'}}^{R} h_2$. Hence, we can reduce our desired local-type bilinear estimate to a tube-type (wave packet) estimate, and the localized waveguide spacetime cube $B_r$. As mentioned above, to apply Tao's classical tube-type bilinear estimate, it remains to transfer the waveguide cube $B_r$ to the standard Euclidean cube $[0,r]^3$.

Finally, we use the aforementioned periodic property to extend the data, and transfer our $B_r$-type wave packet estimate to the standard $[0,r]^3$-type wave packet estimate. This periodic argument allows us to apply Tao's incidence estimate in our situation, and we hence complete the proof.

\section{Proof of main results} \label{S:Proof of main results}
\subsection{Local estimate: integral version}
In this subsection, we do the local-type reduction. Recall that we are going to show the desired bilinear estimate \eqref{T:Key bilinear-11}. By rescaling, it suffices to show
\[\l\|E_{Q_1^{'}}^R h_1 E_{Q_2^{'}}^R h_2 \r\|_{L^p(\bR\times \bT_R \times [0,T_p R^{2-\varepsilon}])} \lesssim \|h_1\|_{L^2(\bR\times \bZ_{1/R})} \|h_2\|_{L^2(\bR \times \bZ_{1/R})}.\]
It is natural to introduce the waveguide cubes $B_r$ in $\bR \times \bT_{R} \times \bR$ as follows
\[B_r:=[0,r]\times [0,r] \times [0,r] \;\;\; \text{for} \;\; r\leq R, \quad B_r:=[0,r]\times [0,R] \times [0,r] \;\;\;\text{for}\;\; r>R.\]
Then by the waveguide version $\varepsilon$-removal lemma, which will be presented in the Appendix A, it suffices to show the following local estimate.
\begin{prop} \label{P:Local estimate-integral}
	Given $p\in [\frac{5}{3},2]$ and $R\gg 1$. For the dyadic cubes
	\[Q^{'}_1 \in R^{-1}\mathcal{Q}_R, \quad Q^{'}_2\in R^{-1}\mathcal{Q}_R, \quad \dist(Q^{'}_1,Q^{'}_2) \gg 1,\]
	the following bilinear estimate
	\begin{equation} \label{P:Rescaled key bilinear-1}
		\l\|E_{Q^{'}_1}^R h_1 E_{Q^{'}_2}^R h_2 \r\|_{L_{x,t}^p\l(B_r\r)} \lesssim_{\alpha} r^{\alpha} \|h_1\|_{L^2(\bR\times \bZ_{1/R})} \|h_2\|_{L^2(\bR \times \bZ_{1/R})},
	\end{equation}
	holds for all dyadic numbers $r\in [1,R^{\frac{2p-2}{3-p}}]$.
\end{prop}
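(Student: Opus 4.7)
My plan follows the three-step framework outlined in Section \ref{S:Overview}, adapting Tao's Euclidean proof of \eqref{E:Tao local bilinear} to the waveguide setting. The two new features are that the wave packets for $E^R$ are forced to be periodic in the $\bT$-direction, and that the ball $B_r$ itself wraps around the torus once $r>R$; everything else can be handed off to the incidence estimate \eqref{E:Tao bilinear-wave packets 5/3} as a blackbox.

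\textbf{Step 1 (waveguide wave packet decomposition).} At scale $r$, I would write $E^R_{Q'_i}h_i = \sum_{T_i} \tilde c_{T_i}\psi_{T_i}$, where the sum is indexed by a $\frac{1}{r}$-sized partition of $Q'_i$ in frequency together with an $r$-spaced spatial lattice, the coefficients satisfy $\|\tilde c_{T_i}\|_{\ell^2}\lesssim \|h_i\|_{L^2(\bR\times\bZ_{1/R})}$, and each $\psi_{T_i}$ is (essentially) the characteristic function of the periodic tube $\bigcup_{m\in\bZ}(T_i+(0,mR,0))$, with $T_i$ a Euclidean tube of base radius $r$ and length $r^2$ oriented by the frequency-center of its cube. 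The $R$-periodicity of $\psi_{T_i}$ is forced automatically by the discreteness of $\bZ_{1/R}$ on the Fourier side, and all other bounds (smoothness, essential support, $\ell^2$ bound for coefficients) transfer verbatim from the Euclidean construction.

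\textbf{Step 2 (reduction to incidence).} Inserting the decomposition into \eqref{P:Rescaled key bilinear-1}, a standard pigeonhole that makes $|\tilde c_{T_i}|$ dyadically constant on a subset $\mathcal{T}_{i,B_r}'$ together with Cauchy-Schwarz in $(T_1,T_2)$ reduces the proposition (at the endpoint $p=5/3$; the range $p\in(5/3,2]$ is then obtained by interpolation with the trivial $L^\infty\times L^\infty\to L^\infty$ bound on the unit-volume region $B_r$ appropriately rescaled) to the tube-type estimate
\[\Bigl\|\sum_{T_1\in\mathcal{T}_{1,B_r}'}\sum_{T_2\in\mathcal{T}_{2,B_r}'}\psi_{T_1}\psi_{T_2}\Bigr\|_{L^{5/3}(B_r)}\lesssim_\alpha r^\alpha (\#\mathcal{T}_{1,B_r}')^{1/2}(\#\mathcal{T}_{2,B_r}')^{1/2},\]
where $\mathcal{T}_{i,B_r}'$ consists of those $T_i$ whose periodic tube meets $B_r$. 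The transversality hypothesis needed later for \eqref{E:Tao bilinear-wave packets 5/3} is supplied by the unit separation of $Q'_1,Q'_2$ (after rescaling from the original $R$-separation).

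\textbf{Step 3 (periodic-to-Euclidean transfer).} When $r\le R$, the ball $B_r=[0,r]^3$ is a genuine Euclidean cube, at most $O(1)$ periodic translates can intersect $B_r$, so \eqref{E:Tao bilinear-wave packets 5/3} applies directly. When $r>R$, I would unfold to the enlarged Euclidean cube $[0,r]^3$: each periodic tube contributes $\sim r/R$ Euclidean tubes that meet $[0,r]^3$, so I replace $\sum_{T_i}\psi_{T_i}|_{B_r}$ on $[0,r]^3$ by the honest Euclidean sum over these translates, apply Tao's incidence estimate \eqref{E:Tao bilinear-wave packets 5/3} on $[0,r]^3$, and then divide by the overcounting to return to $B_r$. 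Because the tube directions are unchanged by translation, the transverse hypothesis is preserved under the unfolding. The hard/bookkeeping point is verifying that the $\sim r/R$ multiplicity on the left-hand $L^{5/3}$-volume (the $[0,r]^3$-cube has $r/R$ times the volume of $B_r$ in the $\bT$-direction) cancels precisely with the $\sim r/R$ factor coming from the inflated Euclidean tube counts on the right, so that no spurious power of $r/R$ is lost; once this accounting is done, the desired $r^\alpha$-bound emerges and the proof of Proposition \ref{P:Local estimate-integral} is complete.
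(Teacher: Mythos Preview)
There is a genuine gap in your Step 3 accounting. When $r>R$ and you unfold $B_r$ to the Euclidean cube $[0,r]^3$, the $L^{5/3}$ norm on the left picks up a factor of $(r/R)^{3/5}$ (the integrand is $R$-periodic in $x_2$, and $[0,r]^3$ contains $r/R$ copies of $B_r$), while the tube counts $\#\mathcal{T}_{i,[0,r]^3}=(r/R)\,\#\mathcal{T}'_{i,B_r}$ contribute $(r/R)^{1/2}\cdot(r/R)^{1/2}=(r/R)$ on the right. These do \emph{not} cancel: a factor $(r/R)^{1-3/5}=(r/R)^{2/5}$ survives on the right of the single-period $L^{5/3}$ incidence bound, so the tube estimate you wrote in Step 2 is simply false for $r>R$. (Cancellation in your sense would only occur at $p=1$.)

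Carrying that $(r/R)^{2/5}$ loss through your proposed interpolation with the trivial $L^\infty$ bound---which has no compensating gain---produces a residual loss of $(r/R)^{2/(3p)}$ at exponent $p$; at the top of the range $r=R^{(2p-2)/(3-p)}$ this is a genuine positive power of $R$ whenever $p>5/3$, and the argument does not close. The paper instead runs the unfolding and applies Tao's incidence estimate \emph{at each} $p\in[5/3,2]$: the general-$p$ Euclidean version \eqref{E:Tao bilinear-wave packets} carries a factor $r^{(5-3p)/(2p)}$ on the right, which for $p>5/3$ is a \emph{gain}, and the single-period bound then reads
\[
\Bigl\|\sum_{T_1\in\mathcal{T}_1}\sum_{T_2\in\mathcal{T}_2}\phi_{T_1}\phi_{T_2}\Bigr\|_{L^p(B_r)}\lessapprox r^{\frac{5-3p}{2p}}(r/R)^{1/p'}\bigl(\#\mathcal{T}_{1,B_r}\bigr)^{1/2}\bigl(\#\mathcal{T}_{2,B_r}\bigr)^{1/2},
\]
with the arithmetic check $r^{(5-3p)/(2p)}(r/R)^{1/p'}\le 1$ holding precisely on $r\le R^{(2p-2)/(3-p)}$. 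This gain--loss balance is exactly what manufactures the threshold $T_p$, and it is invisible if one insists on working only at $p=5/3$ and interpolating against $L^\infty$.
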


\subsection{Wave packet decomposition and semiclassical regime}
Similar to the Euclidean case, we need a waveguide version wave packet decomposition to investigate the local estimate \eqref{P:Rescaled key bilinear-1}. In this subsection, we are going to establish the desired waveguide version wave packet decomposition, which can also help us directly handle the semiclassical regime, see Remark \ref{R:Simiclassical range} below. The classical Euclidean version wave packet decomposition can be seen in \cite[Lemma 4.1]{Tao2003}, see also \cite[Section 2]{Demeter2020}.

For a base set $ Q^{'}_i \subset \bR \times \bZ_{1/R}$ and the associated cap
\[S_i:= \l\{(\xi,|\xi|^2): \xi \in Q^{'}_i \r\}, \quad S_i \subset \bR\times \bZ_{1/R} \times \bR,\]
denote a $S_i$-tube (with scale $r$) to be any set of the form
\[T:= \{(x,t): \frac{r}{2}\leq t \leq r, |x-(x_T + tv_T)| \leq r^{\frac{1}{2}}\},\]
with
\[x_T \in r^{\frac{1}{2}}\bZ^2, \quad v_T \in r^{-\frac{1}{2}}\bZ^2 \cap Q^{'}_i, \quad (1,v_T) \bot S_i.\]
For convenience, we further denote the single-period $\widetilde{S}_i$-tube (with scale $r$) to be any set of the form
\[T:= \{(x,t):\frac{r}{2}\leq t \leq r, |x-(x_T + tv_T)| \leq r^{\frac{1}{2}}\},\]
with
\[x_T \in r^{\frac{1}{2}}\bZ^2 \cap (\bR \times [0,R]), \quad v_T \in r^{-\frac{1}{2}}\bZ^2 \cap Q^{'}_i, \quad (1,v_T) \bot S_i.\]
\begin{lemma}[Waveguide wave packet decomposition at scale $r$] \label{L:Waveguide wave packet}
	Let $1\leq \sqrt{r} \leq R$ and the sets $Q_i^{'} \subset \bR \times \bZ_{1/R}$. For each $h_i \in L^2(Q^{'}_i)$, there exists a decomposition
	\begin{equation} \label{L:Waveguide wave packet-1}
		E_{Q_i^{'}}^{R} h_i(x,t) =\sum_{T_i} c_{T_i} \phi_{T_i} (x,t)
	\end{equation}
	where $T_i$ ranges over all $S_i$-tubes. This decomposition satisfies the following properties: for each fixed tube $T_i$ and time $t\in[\frac{r}{2},r]$, the function $\phi_{T_i}(t)$ has Fourier support in the set
	\[\{\xi\in \bR^2: \xi = v_{T} + O(r^{-\frac{1}{2}})\}\]
	and obeys the pointwise decay estimates
	\begin{equation} \label{L:Waveguide wave packet-2}
		|\phi_{T_i}(x,t)| \lesssim r^{-\frac{1}{2}} \l(1+ \frac{|x-(x_T +tv_T)|}{r^{\frac{1}{2}}}\r)^{-N}, \quad\quad \forall N>0;
	\end{equation}
	then for any collection $\mathcal{T}_i$ of $S_i$-tubes, there holds the following probability estimate
	\begin{equation} \label{L:Waveguide wave packet-3}
		\l\|\sum_{T_i \in \mathcal{T}_i} \phi_{T_i} (x,t)\r\|_{L^2}^2 \lesssim \# \mathcal{T}_i;
	\end{equation}
	and the complex-valued coefficients $c_{T_i}$ obey the periodic property
	\begin{equation} \label{L:Waveguide wave packet-4}
		|c_{T_i}|= |c_{T_{i,m}}|, \quad T_{i,m}:= T_i + (0, mR, 0) \quad \text{with} \quad m\in \bZ;
	\end{equation}
	moreover, by periodization, if we rewrite this decomposition \eqref{L:Waveguide wave packet-1} as follows
	\begin{equation} \label{L:Waveguide wave packet-5}
		E_{Q_i^{'}}^{R} h_i(x,t) =\sum_{T_i} \widetilde{c}_{T_i} \psi_{T_i}(x,t), \quad\quad \psi_{T_i}(x,t):= e^{ixv_T+ it|v_T|^2} \sum_{m\in \bZ} \phi_{T_i,m}(x,t),
	\end{equation}
	where $T_i$ ranges over all $\widetilde{S}_i$-tubes, then the single-period coefficients $\widetilde{c}_{T_i}$ obey the $\ell^2$ bound
	\begin{equation} \label{L:Waveguide wave packet-6}
		\|\widetilde{c}_{T_i}\|_{\ell^2} \lesssim \|h_i\|_{L^2}.
	\end{equation}
\end{lemma}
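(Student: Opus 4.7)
The plan is to transplant Tao's standard wave packet construction \cite[Lemma 4.1]{Tao2003} to the waveguide setting. It relies on two layers of a smooth partition of unity: a frequency partition at scale $r^{-1/2}$ indexed by velocities $v_T\in r^{-1/2}\bZ^2 \cap Q'_i$, and a physical partition at scale $r^{1/2}$ indexed by positions $x_T\in r^{1/2}\bZ^2$ transported along the classical flow $x\mapsto x+tv_T$. The hypothesis $\sqrt{r}\leq R$ ensures $r^{-1/2}\geq R^{-1}$, so the velocity lattice is no finer than the dual frequency lattice $R^{-1}\bZ$ in the second coordinate and the smoothing in $\xi$ at scale $r^{-1/2}$ is compatible with the discrete frequency support $\bR\times\bZ_{1/R}$ of $h_i$.

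Concretely, choose a Schwartz bump $\chi$ on $\bR^2$ with $\sum_{k\in\bZ^2}\chi(\cdot-k)\equiv 1$, set $\chi_{v}(\xi):=\chi(r^{1/2}(\xi-v))$, and write
\[
E^R_{Q'_i}h_i(x,t) \;=\; \sum_{v_T\in r^{-1/2}\bZ^2} \int_{Q'_i}e^{i(x\xi+t|\xi|^2)}\chi_{v_T}(\xi)h_i(\xi)\,\ddd\xi.
\]
After extracting the modulation $e^{i(xv_T+t|v_T|^2)}$, the remaining factor has Fourier support in $B(0,O(r^{-1/2}))$ and is therefore slowly varying at physical scale $r^{1/2}$. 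Inserting the physical partition of unity $1\equiv\sum_{x_T\in r^{1/2}\bZ^2}\eta\bigl(r^{-1/2}(x-x_T-tv_T)\bigr)$ isolates the contribution at position $x_T$ and velocity $v_T$, and defines $c_{T_i}\phi_{T_i}$. The Fourier-support claim is immediate from the definition of $\chi_{v_T}$; the pointwise decay \eqref{L:Waveguide wave packet-2} follows by $N$-fold integration by parts in $\xi$ in the oscillatory integral defining $\phi_{T_i}$, each integration gaining a factor $(1+|x-x_T-tv_T|/r^{1/2})^{-1}$ from the $r^{-1/2}$-smoothness of $\chi_{v_T}$; and the near-orthogonality bound \eqref{L:Waveguide wave packet-3} combines Plancherel (using the disjoint frequency supports of $\chi_{v_T}$ for distinct $v_T$) with bounded overlap of the physical bumps $\eta(r^{-1/2}(\cdot-x_T-tv_T))$ for distinct $x_T$ at fixed $v_T$.

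The two features that distinguish the waveguide setting from Tao's are the periodicity \eqref{L:Waveguide wave packet-4} and the single-period $\ell^2$ bound \eqref{L:Waveguide wave packet-6}. Since $h_i$ is supported on $\bR\times\bZ_{1/R}$, the extension $E^R_{Q'_i}h_i$ is automatically $R$-periodic in $x_2$; applying the translation $x\mapsto x+(0,R)$ to the decomposition and observing that $\phi_{T_i}(x+(0,R),t)$ differs from $\phi_{T_{i,-1}}(x,t)$ only by the phase $e^{iR(v_T)_2}$ forces $|c_{T_{i,1}}|=|c_{T_i}|$, hence \eqref{L:Waveguide wave packet-4}. Periodizing each orbit $\{T_{i,m}\}_{m\in\bZ}$ to a single representative (with the phase factors reassembled into the extra modulation of \eqref{L:Waveguide wave packet-5}) yields $\psi_{T_i}$, and \eqref{L:Waveguide wave packet-6} follows from Plancherel on $\bR\times\bZ_{1/R}$: the coefficients $\widetilde{c}_{T_i}$ for fixed $v_T$ are essentially the Fourier coefficients of $\chi_{v_T}h_i$ on the dual cylinder, so
\[
\sum_{T_i \text{ single period}}|\widetilde{c}_{T_i}|^2 \;\sim\; \sum_{v_T}\|\chi_{v_T}h_i\|^2_{L^2(\bR\times\bZ_{1/R})} \;\sim\; \|h_i\|^2_{L^2(\bR\times\bZ_{1/R})}.
\]
The main obstacle is purely bookkeeping: keeping the phase conventions consistent so that the formal periodization $\sum_m \phi_{T_{i,m}}$ descends to a genuine function on the waveguide $\bR\times\bT_R\times\bR$ rather than merely a formal sum on the universal cover, which is what forces the precise form of the modulation factor appearing in \eqref{L:Waveguide wave packet-5}.
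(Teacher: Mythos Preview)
Your proposal is correct and follows essentially the same strategy as the paper. The one organizational difference worth noting is the order in which periodicity is introduced: the paper defines the periodized wave packet $\psi_T$ directly as an integral over the discrete frequency lattice $\bR\times\bZ_{\sqrt{r}/R}$ and then invokes the Poisson summation formula to unfold it as $e^{ixv_T+it|v_T|^2}\sum_{m}\phi_{T_{m}}$, with the coefficients $\widetilde{c}_T$ arising as Fourier coefficients of $\Upsilon(r^{1/2}(\cdot-v_T))h$; you instead run the Euclidean construction first and then read off \eqref{L:Waveguide wave packet-4} from the $R$-periodicity of $E^R_{Q'_i}h_i$ in $x_2$, periodizing afterward. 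These are two sides of the same Poisson-summation identity, and the paper's route has the minor advantage of making the phase bookkeeping you flag at the end automatic, while yours makes the reduction to Tao's Euclidean lemma more transparent.
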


\begin{remark} \label{R:Simiclassical range}
	By the waveguide wave packet decomposition Lemma \ref{L:Waveguide wave packet}, for the desired local estimate Proposition \ref{P:Local estimate-integral}, the case $r\lesssim R$ can be viewed as Euclidean case. The key observation is that there are only $O(1)$ single-period $\widetilde{S}_i$-tubes in the decomposition \eqref{L:Waveguide wave packet-1}, and all the Euclidean properties similarly hold on waveguide.
\end{remark}

\begin{proof}[\textbf{Proof of Lemma \ref{L:Waveguide wave packet}}]
	We imitate the proof for classical Euclidean version wave packet decomposition, which can be seen in \cite[Section 2]{Demeter2020}. Consider a smooth function $\Upsilon: [-4,4]^2 \to [0,\infty)$ such that
	\[\sum_{\eta\in \bZ^2}\Upsilon(\xi-\eta) = 1, \quad \forall\; \xi\in\bR^2.\]
	Then for each tube $T$ with the position $x_T \in r^{\frac{1}{2}}\bZ^2$ and velocity $v_T \in r^{-\frac{1}{2}}\bZ^2$, we define the associated function
	\[\psi_T(x,t):= r^{-\frac{1}{2}} e^{ixv_T + it |v_T|^2} \int_{\bR\times \bZ_{\sqrt{r}/R}} e^{\frac{i(x-x_T +t v_T) \xi}{\sqrt{r}} + \frac{it |\xi|^2}{r}} \Upsilon (\xi) \ddd \xi,\]
	and the associated coefficient
	\[\widetilde{c}_T:= r^{-\frac{1}{2}} e^{-i x_T v_T} \int_{\bR \times \bZ_{1/R}} e^{-ix_T \xi} h(\xi) \Upsilon(r^{\frac{1}{2}}(\xi-v_T)) \ddd\xi.\]
	Note that $\psi_{T}(x_1,x_2,t) = \psi_{T}(x_1,x_2+m R,t)$ up to the constant $2\pi$ in the phase function. Denote the direction $e_2:=(0,1,0)$. By the Poisson summation formula, one can obtain
	\[\psi_T(x,t)= e^{ixv_T + it |v_T|^2} \sum_m \phi_{T_{m}}(x,t), \quad \phi_{T_m}:= r^{-\frac{1}{2}} \int_{\bR^2} e^{\frac{i(x-x_T +t v_T -mRe_2) \xi}{\sqrt{r}} + \frac{it |\xi|^2}{r}} \Upsilon (\xi) \ddd \xi.\]
	Then by the partition of unit and Fourier series theory, we obtain the desired wave packet decomposition \eqref{L:Waveguide wave packet-1} and \eqref{L:Waveguide wave packet-5}. Applying the non-stationary phase estimate and the $\bR\times \bT_R$ version of Parseval identity, one can check all these desired properties. Hence the proof is finished.
\end{proof}

Notice the following implication
\[r\leq R^{\frac{2p-2}{3-p}} \quad \Rightarrow \quad r^{\frac{5-3p}{2p}} (r R^{-1})^{\frac{1}{p'}} \leq 1.\]
Thus to show Proposition \ref{P:Local estimate-integral}, recalling the Remark \ref{R:Simiclassical range}, it suffices to show the following lemma.
\begin{lemma} \label{L:Local estimate-integral}
	Given $p\in [\frac{5}{3},2]$ and $R\gg 1$. For the dyadic cubes
	\[Q_1 \in R^{-1}\mathcal{Q}_R, \quad Q_2\in R^{-1}\mathcal{Q}_R, \quad \dist(Q_1,Q_2) \gg 1,\]
	we have the following bilinear estimate
	\begin{equation} \label{L:Rescaled key bilinear-1}
		\l\|E_{Q_1}^R h_1 E_{Q_2}^R h_2 \r\|_{L_{x,t}^p\l(B_r\r)} \lesssim_{\alpha} r^{\alpha} r^{\frac{5-3p}{2p}} (r R^{-1})^{\frac{1}{p'}} \|h_1\|_{L^2(\bR\times \bZ_{1/R})} \|h_2\|_{L^2(\bR \times \bZ_{1/R})}, \quad \forall r\in [R,R^{\frac{2p-2}{3-p}}].
	\end{equation}
\end{lemma}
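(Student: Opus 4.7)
The plan is to reduce the lemma to Tao's Euclidean bilinear wave packet estimate by exploiting the $R$-periodicity in $x_2$ of $E^{R}_{Q_i} h_i$, as outlined in Section \ref{S:Overview}. Applying Lemma \ref{L:Waveguide wave packet} at scale $r$ I first decompose
\[
E^{R}_{Q_i} h_i = \sum_{T_i} \widetilde{c}_{T_i}\,\psi_{T_i}, \qquad \|\widetilde{c}_{T_i}\|_{\ell^2}\lesssim \|h_i\|_{L^2},
\]
with $T_i$ running over single-period $\widetilde{S}_i$-tubes. Using the representation $\psi_{T_i} = e^{i x v_{T_i} + it|v_{T_i}|^2}\sum_{m\in \bZ}\phi_{T_i,m}$ from \eqref{L:Waveguide wave packet-5}, I view each $E^{R}_{Q_i} h_i$ as an $R$-periodic-in-$x_2$ function on $\bR^2\times\bR$; this lifts the waveguide decomposition to a genuine Euclidean wave packet expansion over all $S_i$-tubes $T_i'$, with coefficients $c_{T_i'}$ that satisfy the periodicity $|c_{T_{i,m}}| = |\widetilde{c}_{T_i}|$ from \eqref{L:Waveguide wave packet-4}.

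The key identity that converts the waveguide norm on $B_r = [0,r]\times [0,R]\times [0,r]$ into a Euclidean norm on $[0,r]^3$ is the tiling identity: since $r\geq R$, the interval $[0,r]$ in the $x_2$-direction covers $\sim r/R$ copies of the period $[0,R]$, so for any $R$-periodic-in-$x_2$ function $f$,
\[
\|f\|_{L^p(B_r)}^p = \tfrac{R}{r}\,\|f\|_{L^p([0,r]^3)}^p.
\]
On the Euclidean cube $[0,r]^3$, the separation $\dist(Q_1,Q_2)\gg 1$ supplies the transversality required for Tao's bilinear theory. I would apply his Euclidean bilinear wave packet estimate, obtained from the incidence bound \eqref{E:Tao bilinear-wave packets 5/3} at $p=\tfrac{5}{3}$ (with the $r^\alpha$-loss) together with the Bessel-type orthogonality \eqref{L:Waveguide wave packet-3} and the Euclidean $L^4$-Strichartz in wave packet form, followed by interpolation in $p$, to obtain
\[
\|E^{R}_{Q_1} h_1 \cdot E^{R}_{Q_2} h_2\|_{L^p([0,r]^3)} \lesssim_\alpha r^\alpha\, r^{(5-3p)/(2p)}\, \|c^1\|_{\ell^2(\mathcal{T}_1'([0,r]^3))}\, \|c^2\|_{\ell^2(\mathcal{T}_2'([0,r]^3))},
\]
where $\mathcal{T}_i'([0,r]^3)$ denotes the Euclidean $S_i$-tubes meeting $[0,r]^3$.

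Finally, a straightforward count shows that each single-period tube admits $\sim r/R$ Euclidean translates intersecting $[0,r]^3$; combined with the coefficient periodicity $|c_{T_{i,m}}|=|\widetilde c_{T_i}|$ this yields $\|c^i\|_{\ell^2(\mathcal{T}_i'([0,r]^3))}^2 \lesssim (r/R)\,\|\widetilde{c}_{T_i}\|_{\ell^2}^2 \lesssim (r/R)\,\|h_i\|_{L^2}^2$. Inserting these estimates together with the tiling identity gives
\[
\|E^{R}_{Q_1}h_1 \cdot E^{R}_{Q_2}h_2\|_{L^p(B_r)} = (R/r)^{1/p}\,\|E^{R}_{Q_1}h_1 \cdot E^{R}_{Q_2}h_2\|_{L^p([0,r]^3)} \lesssim r^{\alpha}\,r^{(5-3p)/(2p)}\,(rR^{-1})^{1/p'}\,\|h_1\|_{L^2}\|h_2\|_{L^2},
\]
which is exactly the bound claimed in \eqref{L:Rescaled key bilinear-1}. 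The principal obstacle is the rigorous importation of Tao's incidence estimate—originally stated for sharp characteristic functions of tubes—into the wave packet framework with general $\ell^2$ coefficients and for every $p\in[\tfrac{5}{3},2]$; the rapid-decay tails in \eqref{L:Waveguide wave packet-2} (in place of hard cutoffs) and the boundary effects in the tube count must be absorbed by a standard Schwartz-tail summation argument, which is where the bulk of the technical work lies.
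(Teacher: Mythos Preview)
Your proposal is correct and follows essentially the same route as the paper: wave packet decomposition via Lemma \ref{L:Waveguide wave packet}, periodic extension from $B_r$ to $[0,r]^3$, and then Tao's Euclidean incidence estimate \eqref{E:Tao bilinear-wave packets}. The only cosmetic difference is ordering: the paper first applies a dyadic pigeonholing (as in \cite[p.~1369--1370]{Tao2003}) to reduce to $0/1$ coefficients and the counting form \eqref{E:Tube bilinear estiamte-single}, and only then performs the periodic extension to reach \eqref{E:Tube bilinear estiamte}; you instead carry the $\ell^2$ coefficients through the tiling identity and invoke the $\ell^2$-coefficient version of Tao's estimate directly, which is equivalent after the same pigeonholing.
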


\subsection{Local estimate: tube version}
In this subsection, we reduce the desired local bilinear integral estimate \eqref{L:Rescaled key bilinear-1} to a tube-type incidence estimate. Note that the waveguide version wave packet decomposition satisfies the periodic property \eqref{L:Waveguide wave packet-5} and single-period bound \eqref{L:Waveguide wave packet-6}. Therefore, we only need to consider the periodic collections $\mathcal{T}_i$ of $S_i$-tubes, which means
\[T\in \mathcal{T}_i \quad \Rightarrow \quad T+(0,mR,0) \in \mathcal{T}_i, \quad \forall m\in\bZ.\]
For a subset $B\subset \bR^3$, we introduce the intersection notation
\[\mathcal{T}_{i, B} := \{T: T\in \mathcal{T}_i, T\cap B \neq \emptyset\}.\]
Applying the waveguide version wave packet decomposition Lemma \ref{L:Waveguide wave packet} and some dyadic pigeonholing arguments as shown in \cite[page 1369-1370]{Tao2003}, to establish the desired bilinear estimate \eqref{L:Rescaled key bilinear-1}, it suffices to establish the following single-period incidence estimate
\begin{equation} \label{E:Tube bilinear estiamte-single}
	\l\|\sum_{T_1\in \mathcal{T}_1} \sum_{T_2\in \mathcal{T}_2} \phi_{T_1} \phi_{T_2} \r\|_{L^p(B_r)} \lessapprox r^{\frac{5-3p}{2p}} (r R^{-1})^{\frac{1}{p'}} \l(\# \mathcal{T}_{1,B_r} \r)^{1/2} \l(\# \mathcal{T}_{2,B_r} \r)^{1/2}, \quad \forall r\in [R,R^{\frac{2p-2}{3-p}}]
\end{equation}
Here the notation $A\lessapprox B$ means that $A\leq C_{\alpha} r^{\alpha} B$ holds for all $\alpha>0$.

At the end of this subsection, we point out that our waveguide problem has been reduced to an Euclidean problem now. This is because that the wave packets $\phi_{T_i}$ can be formally viewed as characteristic function of Euclidean tubes, and the waveguide cubes $B_r$ are indeed Euclidean cuboids (all these objects are in the Euclidean situation now).

\subsection{Periodic property and the bilinear estimate}
As mentioned above, by applying the waveguide version wave packet decomposition, one can reduce a waveguide problem to an associated Euclidean problem. Therefore, based on this reduction, one can apply the classical Euclidean-type bilinear restriction estimate to establish an associated waveguide-type bilinear estimate.

However, compared with the Euclidean case, one can see that the waveguide cubes $B_r$ appeared in \eqref{E:Tube bilinear estiamte-single} is different from the standard cubes in $\bR^3$. To overcome this problem, we make use of the aforementioned periodic property of $\mathcal{T}_i$, so that we can extend the integral range $B_r$ to the range $[0,r]^3$. Indeed, by the periodic property of $\mathcal{T}_i$, to show the desired single-period incidence estimate \eqref{E:Tube bilinear estiamte-single}, it suffices to establish the following multiple-period incidence estimate
\begin{equation} \label{E:Tube bilinear estiamte}
	\l\|\sum_{T_1\in \mathcal{T}_1} \sum_{T_2\in \mathcal{T}_2} \phi_{T_1} \phi_{T_2} \r\|_{L^p([0,r]^3)} \lessapprox r^{\frac{5-3p}{2p}} \l(\# \mathcal{T}_{1,[0,r]^3} \r)^{1/2} \l(\# \mathcal{T}_{2, [0,r]^3} \r)^{1/2}, \quad \forall r \in [R,R^{\frac{2p-2}{3-p}}].
\end{equation}

Next we will explain that this desired estimate essentially follows from Tao's bilinear estimate. In his paper \cite{Tao2003}, Tao mainly states his result for the crucial index $p=\frac{5}{3}$, which indeed comes from interpolation with the $L^1$-estimate and $L^2$-estimate. Here we restate Tao's bilinear restriction result for general index $p\in [1,2]$. In fact, Tao's bilinear estimate on wave packets can be stated as follows
\begin{equation} \label{E:Tao bilinear-wave packets}
	\l\|\sum_{T_1\in \mathcal{T}_1^{'}} \sum_{T_2\in \mathcal{T}_2^{'}} \phi_{T_1} \phi_{T_2}\r\|_{L^p([0,r]^3)} \lessapprox r^{\frac{5-3p}{2p}} \l(\# \mathcal{T}_{1,[0,r]^3}^{'} \r)^{1/2} \l(\# \mathcal{T}_{2,[0,r]^3}^{'} \r)^{1/2}, \quad \forall r \in [R,R^{\frac{2p-2}{3-p}}].
\end{equation}
Here $\mathcal{T}_i^{'}$ is an arbitrary collection of $S_i$-tubes (may be not periodic), and the notation $\mathcal{T}_{i, [0,r]^3}^{'}$ is parallelly defined as
\[\mathcal{T}_{i, [0,r]^3}^{'} := \{T: T\in \mathcal{T}_i^{'}, T\cap [0,r]^3 \neq \emptyset\}.\]
Finally, it can be seen that Tao's bilinear estimate on wave packets \eqref{E:Tao bilinear-wave packets} directly gives our desired estimate \eqref{E:Tube bilinear estiamte}, and thus we complete the proof.

\section*{Appendix A: Waveguide version epsilon-removal lemma}
Here we show a sketch proof of the $\varepsilon$-removal lemma in our waveguide setting. To begin with, we introduce the rescaled waveguide cubes $B_r^R$ in $\bR \times [0,1] \times [0,T]$ as follows
\[B_r^R:=[0,rR^{-1}]\times [0,rR^{-1}] \times [0,rR^{-2}] \;\;\; \text{for} \;\; r\leq R, \quad B_r^R:=[0,rR^{-1}]\times [0,1] \times [0,rR^{-2}] \;\;\;\text{for}\;\; r>R.\]
We consider $r \in [1, R^2T]$ to be dyadic numbers. Then our $\varepsilon$-removal lemma can be stated as follows.
\begin{lemma} \label{L:Epsilon-removal}
	For $p_0\in [\frac{5}{3},2)$ and $R^{-2} \lesssim T \leq 1$, if there holds the local bilinear estimate
	\begin{equation} \label{E:Local bilinear}
		\l\|E_{Q_1^{*}} h_1 E_{Q_2^{*}}h_2 \r\|_{L_{x,t}^{p_0}(B_r^R)} \lesssim_{\alpha} r^{\alpha} R^{2-\frac{4}{p_0}} \|h_1\|_{L^2(\bR\times \bZ)} \|h_2\|_{L^2(\bR\times \bZ)}, \quad\quad \forall r\in [1,R^2 T], \;\; \forall \alpha>0,
	\end{equation}
	then there holds the following global estimate
	\[\l\|E_{Q_1} h_1 E_{Q_2} h_2 \r\|_{L_{x,t}^p(\bR\times \bT \times [0,T])} \lesssim R^{2-\frac{4}{p}} \|h_1\|_{L^2(\bR\times \bZ)} \|h_2\|_{L^2(\bR\times \bZ)}, \quad \forall p\in(p_0,2].\]
\end{lemma}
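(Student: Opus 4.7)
The strategy is to adapt Tao's classical $\varepsilon$-removal argument (as developed in \cite{TVV1998,Tao2003} for the Euclidean restriction problem) to the waveguide $\bR\times \bT$ setting. Set $u:=E_{Q_1}h_1 E_{Q_2}h_2$ and normalize $\|h_1\|_{L^2}=\|h_2\|_{L^2}=1$. It suffices to produce a restricted weak-type bound on $u$ at the endpoint $p_0$ with a polynomial loss in $R$ that is strictly smaller than the target loss $R^{(2-4/p)p}$ for each $p > p_0$: real interpolation between this weak-type bound and the trivial pointwise bound $|u|\lesssim R^2$ (obtained by bounding the extension integral in absolute value) then promotes it to the desired strong-type bound on $L^p$, and the positive gap $p-p_0>0$ absorbs the $r^\alpha$ loss from the hypothesis.

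For each dyadic level $\lambda$, let $\Omega_\lambda := \{|u|>\lambda\}$ and fix a dyadic scale $r\in[1,R^2T]$ to be chosen later. Covering $\Omega_\lambda$ by an essentially disjoint family of waveguide cubes $B_r^R$ that each meet $\Omega_\lambda$ in a set of comparable density produces a family $\mathcal{B}_\lambda$ of cardinality $N_\lambda$. Applying \eqref{E:Local bilinear} cube by cube together with Chebyshev yields
\[|\Omega_\lambda|\lesssim_\alpha N_\lambda \lambda^{-p_0}r^{\alpha p_0}R^{(2-4/p_0)p_0}.\]
To bound $N_\lambda$ from above, I would apply Tao's Carleson-type sparsification to extract a sub-family $\mathcal{B}'_\lambda \subset \mathcal{B}_\lambda$ with $|\mathcal{B}'_\lambda| \gtrsim N_\lambda(\log(R/\lambda))^{-C}$ whose elements are $M$-separated in the noncompact $(x_1,t)$-directions for a parameter $M$ to be chosen. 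Sparseness is imposed only in those directions, since the cubes $B_r^R$ with $r\ge R$ already saturate the torus. An almost-orthogonality computation then controls
\[\lambda^2 |B_r^R| \cdot |\mathcal{B}'_\lambda| \lesssim \sum_{B\in\mathcal{B}'_\lambda}\|\mathbf{1}_B u\|_{L^2}^2 \lesssim \Bigl\|\sum_{B\in\mathcal{B}'_\lambda}\mathbf{1}_B u\Bigr\|_{L^2}^2 \lesssim 1,\]
where the last inequality follows from Plancherel applied to $u=E_{Q_1}h_1 E_{Q_2}h_2$ on the sparse family once the off-diagonal contributions are controlled using the dispersive decay of $e^{it\Delta}$ in the $\bR$-direction.

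Combining the two inequalities for $N_\lambda$, optimizing the scale $r$ in terms of $\lambda$ and $R$, and summing the resulting weak-type bounds across dyadic $\lambda$ via real interpolation with the trivial $L^\infty$ bound yields the desired strong-type bound for every $p \in (p_0,2]$. The main obstacle is carrying out the $L^2$ almost-orthogonality in the waveguide setting: on $\bR\times \bT$ the Schr\"odinger kernel disperses only along the $\bR$-direction, while periodization in $\bT$ produces secondary peaks at integer multiples of the period in the physical torus variable. One must choose the sparse parameter $M$ so that $M$-separation in the $(x_1,t)$-plane still suppresses the off-diagonal interactions despite these periodic echoes. The transversality of $Q_1$ and $Q_2$ in frequency ensures the bilinear phase is nondegenerate transverse to the echoes, which is exactly what makes the Euclidean sparsification scheme carry over here.
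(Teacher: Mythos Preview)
Your sketch has a genuine gap in the almost-orthogonality step, and as written the argument cannot recover any exponent $p<2$.

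The chain
\[
\lambda^{2}\,|B_r^R|\cdot|\mathcal{B}'_\lambda|\ \lesssim\ \sum_{B\in\mathcal{B}'_\lambda}\|\mathbf{1}_B u\|_{L^2}^2\ \lesssim\ 1
\]
fails on both sides. The left inequality requires $\|u\|_{L^2(B)}^2\gtrsim\lambda^2|B_r^R|$ for every $B\in\mathcal{B}'_\lambda$, i.e.\ $|B\cap\Omega_\lambda|\gtrsim|B_r^R|$; but cubes in a covering of $\Omega_\lambda$ need only meet $\Omega_\lambda$ in a set of arbitrarily small measure, so ``covering'' and ``comparable density'' are incompatible hypotheses. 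The right inequality, on the other hand, is simply $\sum_B\|u\|_{L^2(B)}^2\le\|u\|_{L^2(\bR\times\bT\times[0,T])}^2\lesssim 1$ by the $L^4$ Strichartz estimate of \cite{TT2001}; it uses no sparseness and no dispersive decay. If you drop the unjustified density assumption and instead sum $\|u\|_{L^2(B)}^2\ge\lambda^2|B\cap\Omega_\lambda|$ over the full covering, you obtain only $|\Omega_\lambda|\lesssim\lambda^{-2}$, which is exactly the $p=2$ case you already had. In the linear $\varepsilon$-removal of \cite{TVV1998} the sparse $L^2$ bound is nontrivial because $Ef\notin L^2$ globally; here $u\in L^2$ on the slab, so the sparse step carries no new information in this direction.

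The paper follows the bilinear $\varepsilon$-removal of Tao--Vargas \cite{TV2000-I} instead. One reduces to the weak-type bound and then, by two layers of duality (freezing $g_2$, then $G_1$), to controlling a pairing
\[
\langle \widetilde{F}\ast E\chi_{Q_1},\,\widetilde{F}\rangle_{L^2_{x,t}},\qquad \widetilde{F}:=\chi_B\,\overline{E_{Q_2}g_2}\,F,
\]
by $R^{4-8/p}|B|^{2-2/p}$. The kernel $E\chi_{Q_1}$ is then split in time at a scale $M$: for $|t|\gg M$ one uses the pointwise decay $|E\chi_{Q_1}(x,t)|\lesssim |t|^{-1/2}R$ (Van der Corput in the $\bR$-variable only --- this is where the waveguide geometry enters, giving $t^{-1/2}$ rather than $t^{-1}$) together with $L^4$ Strichartz to bound $\|\widetilde{F}\|_{L^1}$; for $|t|\lesssim M$ one invokes the local hypothesis \eqref{E:Local bilinear} through a ``thickened'' bilinear restriction lemma (functions Fourier-supported in a neighborhood of the paraboloid). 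Optimizing $M$ in terms of $|B|$ and $R$ closes the estimate, and the strict inequality $p>p_0$ absorbs the $r^\alpha$ loss. The dispersive decay you mention is thus applied to the \emph{kernel} $E\chi_{Q_1}$, not to off-diagonal interactions of $u$ across cubes.
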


\begin{proof}[\textbf{Proof of Lemma \ref{L:Epsilon-removal}}]
	The proof mainly relies on two facts: the Fourier transform of waveguide-paraboloid measure has some decay, and the $L^2$-$L^4$ Strichartz estimate on the waveguide $\bR\times \bT$ (see \cite{TT2001}). Here we adapt some ideas from \cite[Lemma 2.4]{TV2000-I}.
	
	Since the estimate
	\[\l\|E_{Q_1^{*}} h_1 E_{Q_2^{*}}h_2 \r\|_{L_{x,t}^{\infty}(\bR\times \bT \times [0,T])} \lesssim  R^{2} \|h_1\|_{L^2(\bR\times \bZ)} \|h_2\|_{L^2(\bR\times \bZ)}\]
	holds, it suffices to show the following weak type estimate
	\[|B| \lesssim \lambda^{-p} R^{-4}, \quad\quad B:= \l\{(x,t) \in \bR\times \bT \times [0,T]: \l|E_{Q_1} h_1 E_{Q_2} h_2\r|> \lambda R^2 \r\}\]
	holds for all $0<\lambda \lesssim 1$ and $\|h_1\|_{L^2} = \|h_2\|_{L^2} = 1$. Moreover, it suffices to consider the case $|B|\gtrsim R^{-4}$. By following the arguments in \cite[Proof of Lemma 2.4]{TV2000-I}, for fixed $(\lambda, h_1, h_2)$ which means fixed $B$, it suffices to show the following bilinear estimate
	\[\l\|\chi_B E_{Q_1} g_1 E_{Q_2} g_2\r\|_{L^1_{x,t}} \lesssim R^{2-\frac{4}{p}} |B|^{1-\frac{1}{p}} \|g_1\|_{L^2} \|g_2\|_{L^2}, \quad \forall g_1 \in L^2, \quad \forall g_2\in L^2.\]
	Similarly, by applying some duality arguments, for fixed $g_2$ with $\|g_2\|_{L^2}\sim 1$ , it suffices to show
	\begin{equation} \label{E:Epsilon-removal-5}
		\l\langle \widetilde{F} \ast E\chi_{Q_1}, \widetilde{F}\r\rangle_{L_{x,t}^2} \lesssim R^{4-\frac{8}{p}} |B|^{2-\frac{2}{p}}, \quad \|F\|_{L^{\infty}} \lesssim 1, \quad \widetilde{F}:= \chi_B \overline{E_{Q_2} g_2} F.
	\end{equation}
	Next we divide the time interval into three regions
	\[I_1:= \l\{t: |t| \gg T \r\}, \quad I_2:= \l\{t: |t|\gg M \r\}, \quad I_3:= \l\{t: |t|\lesssim M, |t|\lesssim T \r\}.\]
	Here the constant $M$ will be determined later. Choose a non-negative Schwartz function $\phi$ satisfying
	\[\phi:\bR\to \bR, \quad \supp \phi \subset [-1,1], \quad \phi\equiv 1 \;\;\text{on}\;\; [-\frac{1}{2},\frac{1}{2}], \quad \widehat{\phi} \gtrsim 1 \;\; \text{on} \;\; [-1,1].\]
	The fact that $\supp(\widetilde{F}) \subset \bR\times \bT\times [0,T]$ implies that the first region $I_1$ can be computed as follows
	\[\l\langle \widetilde{F} \ast \l(E\chi_{Q_1}(1- \phi(\frac{t}{2^{10}T}))\r), \widetilde{F}\r\rangle_{L_{x,t}^2} = 0.\]
	Then we consider the second region $I_2$, where we need two facts: the scale-invariant Strichartz estimate and the Fourier transform of surface measure has some decay. First, the Strichartz estimate on $\bR\times \bT$ and H\"older's inequality implies that
	\[\|\widetilde{F}\|_{L^1} \lesssim \|F\|_{L^{\infty}} \|\chi_B\|_{L^{\frac{4}{3}}} \|\chi_{\bR\times \bT\times [0,T]}E_{Q_2} g_2\|_{L^4} \lesssim |B|^{ \frac{3}{4}}.\]
	Second, the Van der Corput lemma on the first variable $\xi_1$ can imply
	\[\|E\chi_{Q_1} (1-\phi(\frac{t}{M}))\|_{L^{\infty}} \lesssim M^{-\frac{1}{2}} R.\]
	Note that our decay in $\bR\times \bT$ is $M^{-\frac{1}{2}}$ rather than $M^{-1}$ in the Euclidean case. However, as we will see later, this decay is enough to establish the desired epsilon-removal consequence. Combining these aforementioned two estimates, we conclude
	\[\l\langle \widetilde{F} \ast \l(E\chi_{Q_1}(1- \phi(\frac{t}{M}))\r), \widetilde{F}\r\rangle_{L_{x,t}^2} \lesssim |B|^{\frac{3}{2}} M^{-\frac{1}{2}} R.\]
	Recalling the desired estimate \eqref{E:Epsilon-removal-5}, we can choose $M:= |B|^{4/p-1} R^{16/p-6}$ and then consider the third region $I_3$. Here we introduce the notation
	\[r_0:=\min\l\{2^{10} TR^2, (|B|R^4)^{4/p-1} \r\},\]
	 it remains to show the following
	 \begin{equation} \label{E:Epsilon-removal-6}
	 	\langle \widetilde{F}\ast \l(E 1_{Q_1} \phi(\frac{t}{r_0 R^{-2}})\r) ,\widetilde{F} \rangle_{L^2} \lesssim R^{4-\frac8p} |B|^{2-\frac2p}.
	 \end{equation}
	Next we are going to use the assumption \eqref{E:Local bilinear} with $r\sim r_0$ to prove this result. Notice that the index $p$ in the right hand side of \eqref{E:Epsilon-removal-6} is strict lager than $p_0$. Hence, we can take a suitable $\alpha$ in \eqref{E:Local bilinear} to show the desired estimate \eqref{E:Epsilon-removal-6}. The details are shown as follows.
	
	Let $\mathscr{F}$ denote the space-time Fourier transform for $(x,t)$. Define
	\[\Gamma_{i,\gamma}=\{(\xi,\tau): \xi \in Q_i, |r_0 R^{-2}(\tau-|\xi^2|)| \sim \gamma\}, \quad i=1,2.\]
	For $(\xi,\tau)\in \Gamma_{1,\gamma}$,
	$$ |\mathscr{F} \l(E 1_{Q_1} \phi(\frac{t}{r_0 R^{-2}}) \r)(\xi,\tau)|=r_0 R^{-2} 1_{Q_1}(\xi) |\widehat{\phi}(r_0 R^{-2}(\tau-|\xi|^2))| \lesssim_N r_0 R^{-2} \gamma^{-N}.$$
	So
	\begin{align*}
		\langle \widetilde{F}\ast \l(E 1_{Q_1} \phi(\frac{t}{r_0 R^{-2}})\r) ,\widetilde{F} \rangle_{L^2}&\lesssim \int |\mathscr{F}(\widetilde{F})|^2 |\mathscr{F} \l(E 1_{Q_1} \phi(\frac{t}{r_0 R^{-2}})\r) |\\
		&\lesssim_N \sum_{\gamma_1} \int_{\Gamma_{1,\gamma_1}}|\mathscr{F}(\widetilde{F})|^2  r_0 R^{-2} \gamma_1^{-N},
	\end{align*}
	it suffices to show that
	$$ \l(\int_{\Gamma_{1,\gamma_1}}|\mathscr{F}(\widetilde{F})|^2 \r)^{\frac12} \lesssim \gamma_1^N r_0^{-1/2} R^{3-\frac4p} |B|^{1-\frac1p}$$
	for all $\gamma_1\ge 1$ and some $N\ge 1$. From the definition of $\widetilde{F}$, this will follow if we show
	$$ \|1_B E_{Q_2} g_2 F\|_{L^2(\Gamma_{1,\gamma_1})} \lesssim \gamma_1^N r_0^{-1/2} R^{3-\frac4p} |B|^{1-\frac1p} \|F\|_{L^\infty}\|g_2\|_{L^2},$$
	for all $F,g_2$. By duality, it suffices to prove
	$$ \|1_B \mathscr{F}(G_1) E_{Q_2} g_2\|_{L^1} \lesssim \gamma_1^N r_0^{-1/2} R^{3-\frac4p} |B|^{1-\frac1p} \|G_1\|_{L^2_{\xi,\tau}} \|g_2\|_{L^2_\xi}$$
	for all $g_2$ and all $G_1$ supported on $\Gamma_{1,\gamma_1}$.
	
	We repeat the similar argument on $g_2$ when we fix $G_1$, so it suffices to show that
	$$ \|1_B \mathscr{F}(G_1)\mathscr{F}(G_2)\|_{L^1}\lesssim (\gamma_1 \gamma_2)^N r_0^{-1} R^{4-\frac4p} |B|^{1-\frac1p}  \|G_1\|_{L^2_{\xi,\tau}} \|G_2\|_{L^2_{\xi,\tau}}  $$
	for all $\gamma_i \ge 1$, $G_i$ supported on $\Gamma_{i,\gamma_i}$, $i=1,2$ and some $N\ge1$.
	By the H{\"o}lder inequality and $B\subset A$, it suffices to show that
	$$  \|1_B \mathscr{F}(G_1)\mathscr{F}(G_2)\|_{L^{p_0}}\lesssim (\gamma_1 \gamma_2)^N r_0^{-1} R^{4-\frac4p} |B|^{\frac1{p_0}-\frac1p}  \|G_1\|_{L^2_{\xi,\tau}} \|G_2\|_{L^2_{\xi,\tau}}.$$
	
	Choose $r\in [1,T R^2]$ such that $r\sim r_0$. We break the proof into two cases.
	
	\emph{Case 1:} there holds $r<R$. Choose a function $\psi_r:\bZ \to \bR$ satisfying
	$$ \supp \psi_r \subset [-r^{-1}R,r^{-1}R]\cap \bZ, \quad \psi_r\equiv 1 \;\;\text{on}\;\; [-\frac{r^{-1}R}{2},\frac{r^{-1}R}{2}], \quad \widehat{\psi_r} \gtrsim 1 \;\; \text{on} \;\; [-rR^{-1},rR^{-1}].$$
	So
	$$\supp\l(\mathscr{F}^{-1}(\widehat{\phi}(\frac{x_1-\widetilde{x_1}}{r R^{-1}})\widehat{\psi_r}(x_2-\widetilde{x_2})\widehat{\phi}(\frac{t-\widetilde{t}}{r R^{-2}})  \mathscr{F}(G_i)     ) \r) \subset N_{\gamma_i r^{-2} R^2}(S_i^*), \quad i=1, 2,$$
	for any $(\widetilde{x_1},\widetilde{x_2},\widetilde{t})\in \bR \times \bT \times \bR $. Here we have used the notations
	\begin{equation} \label{E:Epsilon-removal-9}
		S_i^*:=\{(\xi,|\xi|^2):\xi\in Q_i^{*}\}, \quad i=1,2,
	\end{equation}
	and
	\begin{equation} \label{E:Epsilon-removal-10}
		N_{\gamma}(S_i^*):=\{(\xi,\tau)\in (\bR\times \bZ)\times \bR:  \xi \in Q_i^{*}, |\tau-|\xi|^2|\lesssim \gamma \}, \quad i=1,2.
	\end{equation}
	By the Cauchy-Schwarz inequality, the Lemma \ref{lem:thick bilinear with alpha} stated below and the fact $B\subset \bR\times \bT\times [0,T]$, we conclude
	\begin{align*}
		&\|1_B \mathscr{F}(G_1)\mathscr{F}(G_2)\|_{L^{p_0}} \le \sum_{(\widetilde{x_1},\widetilde{x_2},\widetilde{t})} \| \mathscr{F}(G_1)\mathscr{F}(G_2)\|_{L^{p_0}\l(B_r^R+(\widetilde{x_1},\widetilde{x_2},\widetilde{t})\r)} \\
		&\lesssim \sum_{(\widetilde{x_1},\widetilde{x_2},\widetilde{t})} \| \l(\widehat{\phi}(\frac{x_1-\widetilde{x_1}}{r R^{-1}})\widehat{\psi_r}(x_2-\widetilde{x_2})\widehat{\phi}(\frac{t-\widetilde{t}}{r R^{-2}})\r)^2  \mathscr{F}(G_1)\mathscr{F}(G_2)\|_{L^{p_0}\l(B_r^R+(\widetilde{x_1},\widetilde{x_2},\widetilde{t})\r)} \\
		&\lesssim_\alpha \sum_{(\widetilde{x_1},\widetilde{x_2},\widetilde{t})}  R^{2-\frac{4}{p_0}} r^\alpha \prod_{i=1}^{2} \l( (\gamma_i r^{-2} R^2)^{\frac12}  \|\widehat{\phi}(\frac{x_1-\widetilde{x_1}}{r R^{-1}})\widehat{\psi_r}(x_2-\widetilde{x_2})\widehat{\phi}(\frac{t-\widetilde{t}}{r R^{-2}})  \mathscr{F}(G_i)\|_{L^2} \r)  \\
		&\lesssim R^{2-\frac{4}{p_0}} r^\alpha (\gamma_1 \gamma_2)^{\frac12} r^{-1} R^2 \|G_1\|_{L^2} \|G_2\|_{L^2} ,
	\end{align*}
	where the $(\widetilde{x_1},\widetilde{x_2},\widetilde{t})$ is in
	$$\l\{\l(k_1 r R^{-1},k_2 r R^{-1}, k_3 r R^{-2} \r) : k_1,k_2,k_3 \in \bZ,\;\; 0\le k_2 \lesssim \frac{1}{r R^{-1}}, \;\; 0\le k_3 \lesssim \frac{T}{r R^{-2}} \r\}.$$
	Recalling the assumption $|B|\gtrsim R^{-4}$, we will be done if
	$$ r^{\alpha} \sim r_0^\alpha \lesssim (|B|R^4)^{\frac{2}{p_0}-\frac2p}.$$
	From the definition of $r_0$, we only need to choose $\alpha< (\frac{2}{p}-\frac12)^{-1} (\frac{1}{p_0}-\frac1p)$.
	
	\emph{Case 2:} There holds $r\ge R$. Note that
	$$\supp\l(\mathscr{F}^{-1}(\widehat{\phi}(\frac{x_1-\widetilde{x_1}}{r R^{-1}})\widehat{\phi}(\frac{t-\widetilde{t}}{r R^{-2}})  \mathscr{F}(G_i)    ) \r) \subset N_{\gamma_i r^{-2} R^2}(S_i^*), \quad i=1, 2,$$
	and the rest of the discussion is similar.
\end{proof}

Recalling the notations in \eqref{E:Epsilon-removal-9} and \eqref{E:Epsilon-removal-10}, it remains to prove the following thick-bilinear restriction estimate.
\begin{lemma}\label{lem:thick bilinear with alpha}
	Given $\gamma_1,\gamma_2>0$. Suppose the local bilinear estimate \eqref{E:Local bilinear} holds, then we have
	$$ \l\| \mathscr{F}^{-1} (F_1) \mathscr{F}^{-1} (F_2) \r\|_{L^{p_0}(B_r^R)}\lesssim_\alpha  {R}^{2-\frac{4}{p_0}} r^{\alpha} \gamma_1^{\frac12} \gamma_2^{\frac12} \|F_1\|_{L^2(N_{\gamma_1}(S_1^*))} \|F_2 \|_{L^2(N_{\gamma_2}(S_2^*))} $$
	holds for all $F_i$ supported on $N_{\gamma_i}(S_i^*)$, $i=1,2$, where $\mathscr{F}^{-1}$ is the space-time inverse Fourier transform.
\end{lemma}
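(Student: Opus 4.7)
The approach is the standard ``thickened paraboloid to paraboloid'' transfer used in the bilinear restriction literature (cf.\ Tao--Vargas \cite{TV2000-I} and Tao \cite{Tao2003}): foliate each neighborhood $N_{\gamma_i}(S_i^*)$ by vertical translates of the paraboloid cap $S_i^*$, rewrite $\mathscr{F}^{-1}(F_i)$ as an integral (in the slicing parameter) of genuine extension operators, and then apply Minkowski's inequality plus Cauchy--Schwarz to bootstrap the hypothesized local bilinear estimate \eqref{E:Local bilinear} to its thickened version.

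Concretely, I parametrize points of $N_{\gamma_i}(S_i^*)$ as $(\xi,|\xi|^2+s)$ with $\xi\in Q_i^*\subset \bR\times \bZ$ and $|s|\lesssim \gamma_i$, and for each such $s$ set the sliced datum
\[ h_i^s(\xi):=F_i(\xi,|\xi|^2+s). \]
A Fubini computation on the frequency side then yields
\[ \mathscr{F}^{-1}(F_i)(x,t)=\int_{|s|\lesssim \gamma_i} e^{its}\, E_{Q_i^*} h_i^s(x,t)\,\ddd s, \]
so that the product $\mathscr{F}^{-1}(F_1)\mathscr{F}^{-1}(F_2)$ is a double integral over $(s_1,s_2)$ of products of extension operators against an inert phase $e^{i(s_1+s_2)t}$.

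The remaining steps are routine. First, I take the $L^{p_0}(B_r^R)$ norm, move the $(s_1,s_2)$-integration outside by Minkowski, and apply the assumption \eqref{E:Local bilinear} for each fixed pair $(s_1,s_2)$; this produces the factor $R^{2-4/p_0} r^\alpha$ multiplying $\int\!\!\int \|h_1^{s_1}\|_{L^2}\|h_2^{s_2}\|_{L^2}\,\ddd s_1\,\ddd s_2$. Second, I apply Cauchy--Schwarz to each one-dimensional integral over an interval of length $\sim \gamma_i$, which turns $\int \|h_i^{s_i}\|_{L^2}\,\ddd s_i$ into $\gamma_i^{1/2}\bigl(\int \|h_i^{s_i}\|_{L^2}^2\,\ddd s_i\bigr)^{1/2}=\gamma_i^{1/2}\|F_i\|_{L^2(N_{\gamma_i}(S_i^*))}$ by Fubini in the slicing variable. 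Multiplying these together gives the claimed bound. I do not expect a genuine obstacle: the waveguide nature of the frequency variable (continuous in $\xi_1$, discrete in $\xi_2$) is irrelevant to a slicing in the $\tau$-direction, and the factor $\gamma_1^{1/2}\gamma_2^{1/2}$ emerges automatically from the two Cauchy--Schwarz applications; the only bookkeeping point is that the constant $R^{2-4/p_0}r^\alpha$ in \eqref{E:Local bilinear} is preserved slice-by-slice because each slice is a genuine extension operator $E_{Q_i^*}$.
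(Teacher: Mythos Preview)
Your proposal is correct and follows essentially the same route as the paper: foliate $N_{\gamma_i}(S_i^*)$ by vertical translates of the paraboloid, write $\mathscr{F}^{-1}(F_i)$ as $\int e^{its}E_{Q_i^*}h_i^s\,\ddd s$, then apply Minkowski, the hypothesis \eqref{E:Local bilinear} slice-by-slice, and Cauchy--Schwarz in each $s_i$-integral to produce the $\gamma_i^{1/2}$ factors. The paper's proof is identical in structure and detail.
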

\begin{proof}
	Direct computations can give that
	\begin{align*}
		\mathscr{F}^{-1} (F_i)(x,t)&=\int_{N_{\gamma_i}(S_i^*)}e^{ix\xi +it \tau} F_i(\xi,\tau) \ddd \xi \ddd \tau  \\
		&=\int_{-\gamma_i}^{\gamma_i} \int_{Q_i^{*}} e^{ix\xi+it|\xi|^2} e^{it\tau_i} F_i(\xi,|\xi|^2+\tau_i) \ddd \xi \ddd \tau_i \\
		&= \int_{-\gamma_i}^{\gamma_i}e^{it\tau_i} (E_{Q_i^{*}} h_{i,\tau_i})(x,t) \ddd \tau_i,
	\end{align*}
	where $h_{i,\tau_i}(\xi)=F_i(\xi,|\xi|^2+\tau_i)$, $i=1,2$. Then by the Cauchy-Schwarz inequality, the Minkowski inequality and the assumption \eqref{E:Local bilinear}, we have
	\begin{align*}
		\| \mathscr{F}^{-1}(F_1)   \mathscr{F}^{-1}(F_2)\|_{L^{p_0}(B_r^R)} &= \l\|  \int_{-\gamma_1}^{\gamma_1} \int_{-\gamma_2}^{\gamma_2} e^{it(\tau_1+\tau_2)} E_{Q_1^{*}} h_{1,\tau_1} E_{Q_2^{*}} h_{2,\tau_2}\ddd \tau_2    \ddd \tau_1     \r\|_{L^{p_0}(B_r^R)} \\
		& \le \int_{-\gamma_1}^{\gamma_1} \int_{-\gamma_2}^{\gamma_2} \|E_{Q_1^{*}} h_{1,\tau_1} E_{Q_2^{*}} h_{2,\tau_2}\|_{L^{p_0}( B_r^R)}\ddd \tau_2    \ddd \tau_1 \\
		&\lesssim_\alpha {R}^{2-\frac{4}{p_0}} r^{\alpha} \int_{-\gamma_1}^{\gamma_1} \int_{-\gamma_2}^{\gamma_2} \|h_{1,\tau_1} \|_{L^2(Q_1^{*})} \|h_{2,\tau_2} \|_{L^2(Q_2^{*})}\ddd \tau_2    \ddd \tau_1  \\
		&\lesssim_\alpha {R}^{2-\frac{4}{p_0}} r^{\alpha} \gamma_1^{\frac12} \gamma_2^{\frac12} \|F_1\|_{L^2(N_{\gamma_1}(S_1^*))} \|F_2 \|_{L^2(N_{\gamma_2}(S_2^*))}.
	\end{align*}
	This completes the proof.
\end{proof}
\begin{remark}
	For higher dimensional waveguide $\bR^m \times \bT^n$ with $m\geq 1$, the corresponding epsilon-removal result still holds by applying some noncritical scale-invariant Strichartz estimates, see for instance \cite{Barron2021,KV2016}.
\end{remark}

\section*{Appendix B: Some examples for explaining the sharpness}
Here we give some examples to show that our conclusion Corollary \ref{T:Full bilinear} is sharp up to the $\varepsilon$-loss. These three examples are shown as follows.

Let $C(R,T,p)$ be the sharp constant that
\[\|E_{Q_{1}}h_1 E_{Q_{2}}h_2\|_{L_{x,t}^p(\bR\times \bT \times[0,T])} \lesssim C(R,T,p)  \|h_1\|_{L^{2}}\|h_2\|_{L^{2}}, \quad \forall h_1,h_2 \in L^2(\bR\times \bZ).\]
We only consider the situation $T\ge R^{-2}$.

First, we choose the following
\[Q_1= [0,R] \times \{0,1,\cdots, R\}, \quad Q_2= [0,R] \times \{100R,100R+1,\cdots, 101R\}, \quad h_1=\chi_{Q_1}, \quad h_2=\chi_{Q_2},\]
then
\[|E_{Q_1}h_1(x,t)|\gtrsim R^{2}, \quad |E_{Q_2}h_2(x,t)|\gtrsim R^{2}, \quad \forall (x,t) \in \l\{|x|\lesssim R^{-1}, |t|\lesssim R^{-2} \r\}.\]
These implies the constant $C(R,T,p)$ satisfies
\[C(R,T,p) \gtrsim R^{\frac{2p-4}{p}}.\]
Second, we choose the following
\[Q_1=[0,T^{-1/2}] \times \{0\}, \quad Q_2=[0,T^{-1/2}] \times \{100R\}, \quad h_1=\chi_{Q_1}, \quad h_2=\chi_{Q_2},\]
then
\[|E_{Q_1}h_1(x,t)|\gtrsim T^{-1/2}, \quad |E_{Q_2}h_2(x,t)|\gtrsim T^{-1/2}, \quad \forall (x,t) \in \l\{|x_1|\lesssim T^{1/2}, x_2\in \bT, |t|\lesssim T\r\}.\]
These implies the constant $C(R,T,p)$ satisfies
\[C(R,T,p) \gtrsim T^{\frac{3-p}{2p}}.\]
Third, we choose the following
\[Q_1=[0,a]\times \l([0,a^2R^{-1}]\cap \bZ\r), \quad Q_1=[0,a]\times \l([100R, 100R+a^2R^{-1}]\cap \bZ\r), \quad h_1=\chi_{Q_1}, \quad h_2=\chi_{Q_2},\]
where $a=\max\{R^{1/2}, T^{-1/2}\}$, then
\[|E_{Q_1}h_1(x,t)|\gtrsim a^3R^{-1}, \quad |E_{Q_2}h_2(x,t)|\gtrsim a^3R^{-1}, \quad \forall (x,t) \in S_a,\]
where the set $S_a$ is defined as
\[S_a:= \l\{|x_1| \lesssim a^{-1}, |x_2|\lesssim a^{-2}R, |x_2+200 Rt|\lesssim a^{-2} R, |t|\lesssim a^{-2}\r\}.\]
These implies
\[C(R,T,p) \gtrsim (R+ T^{-1})^{\frac{3p-5}{2p}} R^{\frac{1-p}{p}}.\]
In summary, we have proved the sharpness of our Corollary \ref{T:Full bilinear} up to the $\varepsilon$-loss.

\vspace{30mm}

\newcommand{\etalchar}[1]{$^{#1}$}

\begin{flushleft}
	\vspace{0.3cm}\textsc{Yangkendi Deng\\
		Academy of Mathematics and Systems Science, Chinese Academy of Sciences, Beijing, 100190, People's Republic of China} \\
	\emph{E-mail address}: \textsf{dengyangkendi@amss.ac.cn}
	
	\vspace{0.3cm}\textsc{Boning Di \\
		Academy of Mathematics and Systems Science, Chinese Academy of Sciences, Beijing, 100190, People's Republic of China} \\
	\emph{E-mail address}: \textsf{diboning@amss.ac.cn}
	
	\vspace{0.3cm}\textsc{Chenjie Fan \\
		Academy of Mathematics and Systems Science, Chinese Academy of Sciences, Beijing, 100190, People's Republic of China \\
		Hua Loo-Keng Key Laboratory of Mathematics, Chinese Academy of Sciences, Beijing, 100190, People's Republic of China} \\
	\emph{E-mail address}: \textsf{fancj@amss.ac.cn}
	
	\vspace{0.3cm}\textsc{Zehua Zhao \\
		Department of Mathematics and Statistics, Beijing Institute of Technology, Beijing, People's Republic of China \\
		Key Laboratory of Algebraic Lie Theory and Analysis of Ministry of Education, Beijing, People's Republic of China} \\
	\emph{E-mail address}: \textsf{zzh@bit.edu.cn}
\end{flushleft}

\end{document}